\theoremstyle{definition}
\newtheorem{lemma}{Lemma}
\newtheorem{prop}[lemma]{Proposition}
\newtheorem{theorem}[lemma]{Theorem}
\newtheorem{notation}[lemma]{Notation}
\newtheorem{definition}[lemma]{Definition}
\newtheorem{example}[lemma]{Example}
\newcommand{\pos}{\text{pos}}
\DeclareMathOperator{\init}{\mathcal{I}}
\DeclareMathOperator{\mun}{mun_G}
\DeclareMathOperator{\muneven}{mun_G^E}
\DeclareMathOperator{\munodd}{mun_G^O}
\DeclareMathOperator{\munevensp}{mun_{Q^n}^E}
\DeclareMathOperator{\munoddsp}{mun_{Q^n}^O}
\DeclareMathOperator{\munsp}{mun_{Q^n}}
\DeclareMathOperator{\munsupersp}{mun_{Q^4}}
\newcommand{\possr}{R_H}
\newcommand{\N}{\mathbb{N}}
\newcommand{\up}[1]{^{\uparrow #1}}
\DeclareMathOperator{\Hun}{Hun}
\DeclareMathOperator{\wt}{wt}
\DeclarePairedDelimiter\abs{\lvert}{\rvert} % absolute value
\DeclareMathOperator{\arrowsum}{sum}
\DeclareMathOperator{\diff}{diff}
\title{Hunting Rabbits on the Hypercube}
\author{Jessalyn Bolkema and Corbin Groothuis}
\thanks{Research partially supported by NSF-DMS grant \#1500662, ``The 2015 Rocky Mountain - Great Plains Graduate Research Workshop in Combinatorics". JB was supported by the US Department of Education GAANN Grant P200A120068.}
\keywords{Cops and Robbers, isoperimetric inequalities, hypercubes, graphs}
\subjclass[2010]{Primary 05C57; Secondary 05C35}
\begin{document}
\maketitle

\begin{abstract}

We explore the Hunters and Rabbits game on the hypercube. In the process, we find the solution for all classes of graphs with an isoperimetric nesting property and find the exact hunter number of $Q^n$ to be $1+\sum\limits_{i=0}^{n-2} \binom{i}{\lfloor i/2 \rfloor}$. In addition, we extend results to the situation where we allow the rabbit to not move between shots.
\end{abstract}
\section{Introduction and Background}
Consider a garden with tall bushes in which an invisible rabbit is hiding from a group of hunters who wish to shoot it. At each time step, the hunters, in unison, shoot darts at some set of bushes in the garden. If the rabbit is not hit by any darts, it is startled by the noise and jumps to an adjacent bush. This scenario forms the basis of a variant of Cops and Robbers \cite{Bonato} called Hunters and Rabbits \cite{Russoly}. The hunters win if they are guaranteed to hit the rabbit in a finite number of steps, while the rabbit wins if it can evade the hunters forever. 

This game can be modeled by a finite simple graph, where each bush is represented by a vertex and where edges exist between bushes that are within jumping distance for the rabbit. The rabbit starts at some vertex known only to itself, and the hunters have a vantage point from which they can choose to fire at any vertex. Given enough hunters, the rabbit is clearly doomed. Thus we ask: what is the minimum number of hunters that guarantees a successful hunt? This number is the hunter number of the graph, denoted $\Hun(G)$.

As the hunters get no feedback during the game, a hunter strategy is merely a predetermined list of bushes that the hunters will shoot at each time step. For a strategy to be a winning strategy, it must successfully hit a rabbit which starts anywhere and makes any legal sequence of choices. Because of this adversarial model, one can imagine the hunters handing a shot list to the rabbit, which it may use to look ahead to avoid getting stuck on a dangerous part of the graph. In this case, a winning strategy is a set of shots subject to which a rabbit with complete information still cannot avoid its fate.  

Consider as a basic example the path $P_n$ on vertex set $\{1,2,\ldots n\}$. First note that the strategy of shooting each vertex $1,2,\ldots n$ in order is successful if the rabbit started on an odd numbered vertex. Hence either $1,2,\ldots,n,1,2,\ldots,n$ (when $n$ is odd) 
or $1,2,\ldots,n,n,1,2,\ldots,n$ (when $n$ is even) is a strategy proving that $\Hun(P_n)=1$. In \cite{Britnell2013}, Britnell and Wildon found a necessary and sufficient condition for a graph $G$ to have $\Hun(G)=1$. 
\begin{theorem}[Britnell and Wildon, \cite{Britnell2013}]
$\Hun(G)=1$ if and only if $G$ is a tree and does not contain a subgraph isomorphic to the graph:

\begin{center}
\includegraphics[scale=0.75]{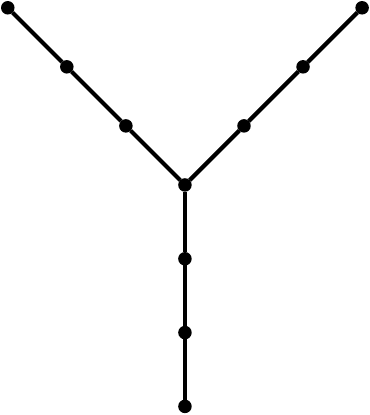}
\end{center}
\end{theorem}
Their setup of the problem used a single prince pursuing a princess, and their extension of the problem to multiple pursuers led to this game of hunters and rabbits.

Suppose that instead of a path, we had a cycle on $n$ vertices. Now at every step, the rabbit has two options available to it, so it can consult its list of hunter shots and choose the other one, forever evading a single hunter. However, a second hunter can choose a single vertex of the cycle to shoot at every time step. Now the rabbit is unable to pass through that vertex, leaving a path of length $n-1$ available as the remainder of the graph. As a path only requires one hunter, we see that two hunters can catch a rabbit on a cycle.

Other authors have considered the hunter number for specific classes of graphs. In \cite{Abramovskaya}, Abramovskaya, Fomin, Golovach, and Pilipczuk found that for an $n \times m$ grid graph G, $$\Hun(G)=\left \lfloor \dfrac{\min\{m,n\}}{2} \right \rfloor +1.$$ They also gave asymptotic lower and upper bounds for the hunter number of trees on $n$ vertices. In \cite{Gruslys}, Gruslys and M\'{e}roueh improve the bounds in \cite{Abramovskaya} to $\Theta(\log(n))$.

Our paper is organized in the following way. Section 2 gives the formal definitions of the Hunters and Rabbits game and develops basic tools and bounds used in the paper for general classes of graphs. In particular, Proposition \ref{prop:bound} of Section 2 is the main technical tool that gives rise to a tight lower bound for the hunter number of certain graphs in the later sections. In Section 3, we describe an optimal hunter strategy for classes of graphs that have the isoperimetric nesting property. A corollary of this result gives the hunter number for grids proved in \cite{Abramovskaya}. Section 4 considers a specific class of graphs with this property, hypercubes, and gives an exact calculation for the hunter number, using some classic results. Finally, in Section 5, we consider a variant where the rabbit is allowed to remain in its current position rather than move to an adjacent vertex after a shot.

Our notation is standard. For reference, see \cite{Bollobas}.

\section{Basic definitions and bounds for general graphs}

We begin by formalizing some of the notation for hunters and rabbits.

\begin{definition}[Hunter strategy]
A \emph{hunter strategy} on a graph $G$, denoted by $H=(H_1, H_2, \ldots, H_m)$, is a finite sequence of sets, where $H_i\subseteq V(G)$ for all $i$. We think of $H_i$ as the collection of vertices shot by hunters at time $i$. 
The \emph{concatenation} of two hunter strategies, $H_1=(H_1,H_2,\ldots, H_m)$ and $H_2=(H'_1, H'_2, \ldots, H'_n)$, is defined as 
\[ H_1\cdot H_2 = (H_1, H_2,\ldots,H_m,H'_1,H'_2,\ldots,H'_n).\] 

\end{definition} 
In order to keep track of the places the rabbit could be, given a specified hunter strategy, we introduce the following definition.

\begin{definition}
We denote the possible rabbit positions at time $i$ with respect to a hunter strategy $H$ by \[\possr(i+1)=N(\possr(i))\setminus H_{i}\subseteq V(G),\] with $\possr(1)=V(G)$. In addition, we define $r_H(i):=\left\vert \possr(i)\right\vert$.
\end{definition}

Notationally, this corresponds to the rabbit moving first at a time step, followed by the hunter shooting. This is just a shift of the ``natural'' time intervals where shots are fired first, but this notation is easier to use for the proofs. We note that in \cite{Abramovskaya}, they adopt the opposite ``time interval'' convention for their own purposes, but this change in frame does not change $\Hun(G)$.

Note that the possible rabbit positions depend on the hunter strategy; we are assuming that the rabbit has successfully evaded the hunters and therefore was not occupying any vertex in $H_i$ at time $i+1$. For this reason, the positions in $\possr(i)$ are known to the strategizing hunters. A walk $v_1 v_2\ldots v_k$ such that $v_i \in \possr(i)$ for all $i$ is called an \emph{H-rabbit walk}. We suppress $H$ when no confusion is possible.

\begin{definition}[Hunter number] 
Given a graph $G$, the \emph{hunter number} of $G$ is given by \[\Hun(G)=\min_{H \in \mathcal{H}(G)}\max_i \left \vert H_i \right \vert ,\] where $\mathcal{H}(G) = \{ H :$ there exists an $n \in \N$ such that $r_H(n)=0\}$
\end{definition}
%----------------------------------------------------------------------------------------------------------------------------------------------

%\section{Bounds for general graphs}
The following lemma allows us to restrict our attention to connected graphs.

\begin{lemma}
$\Hun(G)$ is the maximum of $\Hun(C)$ over all components $C$ of $G$.
\end{lemma}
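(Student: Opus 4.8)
The plan is to prove equality by showing two inequalities, exploiting the fact that the hunters receive no feedback and so play a predetermined shot list that must succeed simultaneously against every possible rabbit, no matter which component it inhabits.

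First I would establish that $\Hun(G) \ge \Hun(C)$ for every component $C$. The key observation is that a winning strategy $H$ on $G$, when restricted to each component, must still catch a rabbit confined to that component. Concretely, if the rabbit is guaranteed to start and remain in a single component $C$ (since edges only connect vertices within a component, any legal rabbit walk stays in one component), then the shots $H$ fires at vertices of $C$ form a valid winning strategy on $C$. I would define the restricted strategy $H|_C = (H_1 \cap V(C), H_2 \cap V(C), \ldots)$ and verify that $\possr[H|_C](i)$, computed within $C$, equals $\possr[H](i) \cap V(C)$; this follows because the neighborhood operator $N$ respects components. Since $H$ is winning on $G$, we have $r_H(n) = 0$ for some $n$, hence $\possr[H](n) \cap V(C) = \emptyset$, so $H|_C$ is winning on $C$. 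As $\max_i |H_i \cap V(C)| \le \max_i |H_i|$, any winning strategy on $G$ yields a winning strategy on each $C$ using no more shots per step, giving $\Hun(G) \ge \max_C \Hun(C)$.

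For the reverse inequality, I would take an optimal winning strategy $H^C$ on each component $C$ and combine them into a single strategy on $G$ that runs them all in parallel: at time step $i$, shoot the union over all components of the $i$-th shot set of $H^C$, padding the shorter strategies with empty shots once they terminate. Because the components are disjoint and the rabbit lives in exactly one of them, this parallel strategy catches the rabbit regardless of which component it chose, and at each step it fires at most $\max_C \Hun(C)$ darts \emph{within any single component}. The subtle point is that the per-step cost is the maximum, not the sum, over components: since a rabbit occupies only one component, we only need enough simultaneous shots to match the worst single component, and the shots in distinct components do not interfere. This shows $\Hun(G) \le \max_C \Hun(C)$.

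The main obstacle is making precise the claim that running the component strategies in parallel does not inflate the number of hunters beyond $\max_C \Hun(C)$. One must argue carefully that the relevant cost is $\max_i \max_C |H^C_i|$ and that this equals $\max_C \max_i |H^C_i| = \max_C \Hun(C)$, rather than a sum across components at a fixed time step. This is legitimate precisely because the hunter number measures the worst single-step shot count and the rabbit is localized to one component, so the darts allocated to the correct component suffice; the remaining darts in other components are harmless and can be reused. I would phrase this as a bookkeeping argument and note that the definition of $\Hun$ as $\min_H \max_i |H_i|$ makes the maximum over components the natural quantity, closing the proof.
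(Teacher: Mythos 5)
Your lower bound direction (restricting a winning strategy on $G$ to each component) is fine, but the upper bound contains a genuine error. You propose running the optimal strategies for all components \emph{in parallel}, shooting $\bigcup_C H^C_i$ at time $i$, and you claim the per-step cost of this combined strategy is $\max_C |H^C_i|$ rather than the sum. That claim contradicts the definition of the hunter number: $\Hun(G)=\min_H \max_i |H_i|$, where $|H_i|$ is the total number of shots fired at step $i$, regardless of where the rabbit might be. Since the components are disjoint, $\bigl|\bigcup_C H^C_i\bigr| = \sum_C |H^C_i|$, so the parallel strategy needs a number of hunters equal to the \emph{sum} of the component requirements, which can vastly exceed $\max_C \Hun(C)$. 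The hunters get no feedback and do not know which component the rabbit occupies, so they cannot ``reuse'' darts fired into the wrong components within a single time step; a dart aimed at component $C_1$ at time $i$ is simply spent. The ``bookkeeping argument'' you defer to at the end is exactly the point that fails, and you correctly identified it as the main obstacle without resolving it.

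The fix --- and the paper's actual one-line proof --- is to run the component strategies \emph{in sequence} rather than in parallel: first play a full winning strategy for $C_1$, then one for $C_2$, and so on. At every time step only one component is being shot, so the per-step cost is at most $\max_C \Hun(C)$. This works precisely because the rabbit can never leave its component: if it lives in $C_j$, it merely wanders inside $C_j$ during the earlier phases, and when phase $j$ begins, the set of its possible positions is contained in $V(C_j) = \possr(0)$ for the strategy $H^{C_j}$; since that strategy wins against every starting position in $C_j$ (and the possible-position sets evolve monotonically under the neighborhood operator), the rabbit is caught during phase $j$. Sequential concatenation is what makes the maximum, rather than the sum, the right quantity.
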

\begin{proof}
Since the hunters have a finite-time optimal strategy for each component, they can employ these strategies in sequence. 
\end{proof}
As one might expect, relationships between graph structures yield straightforward relationships between hunter numbers. We explore a few useful examples of this principle below. 
\begin{prop}
Let $G$, $G'$ be graphs and let $\varphi: G \rightarrow G'$ be a graph homomorphism. If $k=\max_{v\in V(G')}\lvert \varphi^{-1}(v)\rvert $, then $\Hun(G)$ $\leq k\Hun(G')$. In particular, if $G \subseteq G'$, $\Hun(G) \leq \Hun(G')$. 
\label{prop:hom}
\end{prop}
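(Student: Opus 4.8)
The plan is to pull an optimal strategy on $G'$ back to $G$ along $\varphi$. Fix a winning hunter strategy $H' = (H'_1, \ldots, H'_m)$ on $G'$ achieving $\max_i |H'_i| = \Hun(G')$; we may assume each $H'_i$ is a set, since repeating a shot within a single round never helps. Define the \emph{pullback strategy} $\tilde{H} = (\varphi^{-1}(H'_1), \ldots, \varphi^{-1}(H'_m))$ on $G$, where $\varphi^{-1}(H'_i) = \{u \in V(G) : \varphi(u) \in H'_i\}$. Since every vertex of $G'$ has at most $k$ preimages, $|\varphi^{-1}(H'_i)| \le k|H'_i| \le k\Hun(G')$, so the shot-size bound is immediate; the real content is that $\tilde H$ is a winning strategy on $G$.

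The central claim is that $\varphi$ carries the possible rabbit positions forward: $\varphi(R_{\tilde H}(i)) \subseteq R_{H'}(i)$ for every $i$. I would prove this by induction on $i$. The base case is $\varphi(R_{\tilde H}(0)) = \varphi(V(G)) \subseteq V(G') = R_{H'}(0)$. For the inductive step, take $w \in R_{\tilde H}(i) = N(R_{\tilde H}(i-1) \setminus \varphi^{-1}(H'_i))$ and choose a neighbor $u \in R_{\tilde H}(i-1) \setminus \varphi^{-1}(H'_i)$. Since $\varphi$ is a graph homomorphism and $G'$ is simple, $\varphi(u)$ and $\varphi(w)$ are adjacent in $G'$; the inductive hypothesis gives $\varphi(u) \in R_{H'}(i-1)$; and $u \notin \varphi^{-1}(H'_i)$ translates exactly to $\varphi(u) \notin H'_i$. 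Hence $\varphi(u) \in R_{H'}(i-1) \setminus H'_i$ and $\varphi(w) \in N(\varphi(u)) \subseteq R_{H'}(i)$, completing the induction.

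With the claim in hand, the conclusion follows quickly. Because $H'$ is winning there is an $n$ with $R_{H'}(n) = \emptyset$; then $\varphi(R_{\tilde H}(n)) \subseteq R_{H'}(n) = \emptyset$ forces $R_{\tilde H}(n) = \emptyset$, so $\tilde H$ is a winning strategy on $G$ with maximum shot size at most $k\Hun(G')$, giving $\Hun(G) \le k \Hun(G')$. For the ``in particular'' statement, when $H \subseteq G$ the inclusion $\iota : H \hookrightarrow G$ is a graph homomorphism whose fibers have size at most one, so applying the result with source $H$ and target $G$ yields $k = 1$ and $\Hun(H) \le \Hun(G)$.

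I expect the main obstacle to be the inductive step above: one must check that $\varphi$ simultaneously respects adjacency (so that surviving rabbit walks in $G$ map to rabbit walks in $G'$) and respects the avoidance condition, namely $u \notin \varphi^{-1}(H'_i) \Leftrightarrow \varphi(u) \notin H'_i$ (so that evading the pullback shots maps to evading the original shots). Making the neighborhood operator $N$ and the set difference line up correctly under $\varphi$ is the only delicate point; the counting bound on shot sizes and the specialization to subgraphs are routine.
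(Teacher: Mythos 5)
Your proof is correct and takes essentially the same approach as the paper: pull the optimal strategy on $G'$ back along $\varphi$ via preimages, and show that a rabbit evading the pullback strategy in $G$ projects under $\varphi$ to a rabbit evading the original strategy in $G'$. The paper compresses this into the statement that rabbit walks in $G$ map to rabbit walks in $G'$, whereas you formalize the identical fact as an induction on the possible-position sets $R_{H}(i)$; the content is the same.
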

\begin{proof}
Let ${H'}=(H'_1,H'_2\ldots,H'_m$) be a hunter strategy for $G'$ requiring the minimum number of hunters. Define $H_i=\{v: \varphi(v) \in H'_i\}=\varphi^{-1}(H_i')$. Since $\varphi$ is a graph homomorphism, any rabbit walk in $G$ maps to a rabbit walk in $G'$. Since ${H'}$ is a winning hunter strategy for $G'$, all rabbit walks have finite length and thus each rabbit walk in $G$ must also be finite. Hence $H$ is a winning strategy for $G$ with $\max_i\left \vert H_i\right \vert\leq k\max_i\left \vert H_i'\right \vert$. 
\end{proof}
While this upper bound is useful for describing simple strategies for graphs with some inherent structure, the bound is often weak. For example, if we use the unique homomorphism from $K_{1,n}$ to $K_2$, this proposition would give $\Hun(K_{1,n})\leq n$, but it is easy to see that $\Hun(K_{1,n})=1$. We note that the special case of subgraphs was previously stated in \cite{Abramovskaya}.

Lower bounds arise from how sparse certain parts of the graph can be.
\begin{lemma}\label{lem:mindeg}
If $G$ is a graph with minimum degree $\delta$, then $\Hun(G)\geq\delta$.
\end{lemma}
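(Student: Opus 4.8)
The natural approach is to prove the contrapositive at the level of strategies: I will show that any hunter strategy $H$ using at most $\delta-1$ shots per step — that is, $\lvert H_i\rvert \leq \delta-1$ for every $i$ — fails to win, because it never shrinks the set of possible rabbit positions below the whole vertex set. Since $\Hun(G)=\min_{H}\max_i\lvert H_i\rvert$, establishing that no such sparse strategy lies in $\mathcal{H}(G)$ forces every winning strategy to fire at least $\delta$ darts at some step, which is exactly the claim $\Hun(G)\geq\delta$.

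\textbf{Key steps.} First I would fix an arbitrary strategy $H$ with $\lvert H_i\rvert\leq\delta-1$ for all $i$ and prove by induction on $i$ that $\possr(i)=V(G)$. The base case is immediate from the definition $\possr(0)=V(G)$. For the inductive step, assume $\possr(i-1)=V(G)$ and take any vertex $v\in V(G)$. Because $G$ has minimum degree $\delta$, the vertex $v$ has at least $\delta$ neighbors, while $H_i$ contains at most $\delta-1$ vertices; hence at least one neighbor $u$ of $v$ survives, i.e. $u\in V(G)\setminus H_i=\possr(i-1)\setminus H_i$. Then $v\in N(u)\subseteq N\bigl(\possr(i-1)\setminus H_i\bigr)=\possr(i)$, and since $v$ was arbitrary we conclude $\possr(i)=V(G)$. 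This gives $r_H(i)=\lvert V(G)\rvert>0$ for every $i$, so $H\notin\mathcal{H}(G)$. Finally I would conclude that any strategy $H$ achieving $r_H(n)=0$ must satisfy $\max_i\lvert H_i\rvert\geq\delta$, whence $\Hun(G)\geq\delta$.

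\textbf{Main obstacle.} The proof is short, and the only subtlety worth flagging is that it is \emph{not} enough to observe that $V(G)\setminus H_i$ is nonempty; one really needs that \emph{every} vertex $v$ retains an unshot neighbor, so that $v$ reappears as a possible position after the rabbit's forced jump. This is precisely the point where the minimum-degree hypothesis is used, via the pigeonhole comparison between the $\delta$ neighbors of $v$ and the at-most-$(\delta-1)$ shots in $H_i$. Care is also needed to use the correct (open) neighborhood operator $N$, reflecting that the rabbit must move to an adjacent vertex at each step, but beyond this the induction is routine.
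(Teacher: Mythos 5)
Your proof is correct and uses essentially the same idea as the paper: a pigeonhole comparison between the $\delta$ neighbors of any vertex and the at most $\delta-1$ shots per round. The paper phrases it from the rabbit's perspective (``the rabbit always has an unshot neighbor to jump to, so it evades forever''), while you formalize the identical argument via induction on the possible-position sets, proving the slightly stronger statement that $\possr(i)=V(G)$ for all $i$; this is a cleaner match with the paper's definition of $\mathcal{H}(G)$ but not a different approach.
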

\begin{proof}
Suppose there are $m < \delta$ hunters. At any time $t$, the rabbit has at least $\delta - m > 0$ neighbors that are not shot at time $t+1$, so the rabbit can evade the hunters forever.
\end{proof}

This bound can be improved by considering a graph locally.
\begin{lemma}
If the degeneracy of $G$ is at least $k$, then $\Hun(G)\geq k$.  
\end{lemma}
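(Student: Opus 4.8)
The plan is to reduce this to the two bounds we have already established. Recall that the degeneracy of $G$ is the quantity $\max_{H \subseteq G} \delta(H)$, where $\delta(H)$ denotes the minimum degree of the subgraph $H$ and the maximum ranges over all nonempty subgraphs. Saying that the degeneracy is at least $k$ therefore asserts precisely the existence of a subgraph $H \subseteq G$ whose minimum degree satisfies $\delta(H) \geq k$; indeed, if every subgraph had minimum degree strictly less than $k$, the degeneracy would be at most $k-1$.

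First I would fix such a witnessing subgraph $H$. By Lemma \ref{lem:mindeg} applied to $H$ itself, a graph of minimum degree at least $k$ requires at least $k$ hunters, so $\Hun(H) \geq \delta(H) \geq k$. Then I would invoke the subgraph monotonicity recorded in Proposition \ref{prop:hom}: since $H \subseteq G$, we have $\Hun(H) \leq \Hun(G)$. Chaining these two inequalities yields $\Hun(G) \geq \Hun(H) \geq k$, which is the claim.

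The only real content is the observation that degeneracy packages exactly the information the minimum-degree bound needs, namely a subgraph dense enough for Lemma \ref{lem:mindeg} to bite; once the witnessing subgraph is named, both remaining steps are direct citations. Consequently I expect no substantive obstacle here—the lemma is a clean corollary of Lemma \ref{lem:mindeg} and Proposition \ref{prop:hom}. The one point worth stating carefully is that, because $G$ is finite, the maximum in the definition of degeneracy is genuinely attained, so a subgraph $H$ with $\delta(H) \geq k$ truly exists rather than merely being approached in a limit.
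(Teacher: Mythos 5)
Your proof is correct and matches the paper's argument essentially verbatim: the paper takes $H$ to be a $k$-core of $G$ (playing the role of your witnessing subgraph of minimum degree at least $k$), applies Lemma \ref{lem:mindeg} to get $\Hun(H)\geq k$, and then uses the subgraph monotonicity of Proposition \ref{prop:hom} to conclude $\Hun(G)\geq\Hun(H)\geq k$. No differences worth noting.
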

\begin{proof}
Let $H \subseteq G$ be a $k$-core of $G$. By Lemma \ref{lem:mindeg}, $\Hun(H) \geq k$ and by Proposition \ref{prop:hom}, $\Hun(G) \geq \Hun(H) \geq k$.
\end{proof}
Returning to the hunter strategy for paths of length $n$, a stronger lower bound on the hunter number of general graphs emerges. In that example, the hunters drive the rabbit toward the end of the path, where it is trapped. A natural way to catch any rabbit would be to force it into areas that require few hunters, narrowing the number of possible positions it could be at each time step. How many hunters are needed to possibly allow a decreasing sequence of possible rabbit positions?  

\begin{definition}
The \emph{minimum union of $k$ neighborhoods} for a graph $G$ is \[\mun(k):= \min\{\lvert N(W) \rvert: W \subseteq V(G), \lvert W \rvert = k\}.\] Note that $W\cap N(W)$ need not be empty.
\end{definition}

This definition is related to the study of isoperimetric inequalities. The classical isoperimetric inequality is a lower bound for the surface area of a region in $\mathbb{R}^n$ with a fixed volume. There are two primary types of isoperimetric inequalities in graphs. The \textit{edge boundary} for a vertex set $V$ is defined as $E(V,G\setminus V)$. The \textit{vertex boundary} for a vertex set $V$ is defined as $N(V)\setminus V$. We will be interested in the vertex boundary.

A vertex isoperimetric inequality for a graph is a lower bound on $N[V]$ for all sets $V$ of a fixed size. The function $\mun(k)$ replaces closed neighborhoods with open ones. While by definition, $\mun(k)$ will help give the lower bound, a closed form for the function is more desirable. We are often interested in defining the vertex sets that achieve these bounds. Harper in \cite{Harper} finds an isoperimetric inequality for the discrete hypercube and a vertex ordering achieving it. Leader and Bollob\'{a}s in \cite{Discretetorus} find an isoperimetric inequality for the discrete torus. For a survey on the topic, see \cite{Bezrukov}.

This minimum union of neighborhoods can be used to find a lower bound for the hunter number.
\begin{prop}\label{prop:bound}
Let $G$ be a graph and let $m_0= \max_k\{\mun(k)-k\}$. Then $\Hun(G)> m_0$.
\label{prop:mun}
\end{prop}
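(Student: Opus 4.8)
The plan is to prove the contrapositive: if the hunters use at most $m_0$ shots per round, that is $\max_i \lvert H_i\rvert = m \le m_0$, then $r_H(i) > 0$ for every $i$, so no such strategy ever catches the rabbit and therefore $\Hun(G) \ge m_0 + 1$. First I would fix a value $k^*$ attaining the maximum, so that $\mun(k^*) = k^* + m_0$. The case $m_0 = 0$ reduces to showing $\Hun(G) \ge 1$, which is immediate from Lemma \ref{lem:mindeg}, so I assume $m_0 \ge 1$; since $\mun(0) - 0 = 0 < m_0$, the maximizer must satisfy $k^* \ge 1$, and in particular $k^* + m_0 \ge 1$.

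The crucial structural observation is that $\mun$ is nondecreasing. Indeed, for any $k$, take a set $W'$ of size $k+1$ realizing $\mun(k+1)$ and delete one vertex to obtain $W \subseteq W'$ of size $k$; then $N(W) \subseteq N(W')$, so $\mun(k) \le \lvert N(W)\rvert \le \lvert N(W')\rvert = \mun(k+1)$. This monotonicity is precisely what allows a lower bound on the number of possible rabbit positions to survive an application of the neighborhood operation.

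With these facts in hand, I would establish by induction the invariant $r_H(i) \ge k^* + m_0$ for all $i$. For the base case, $\possr(0) = V(G)$ and $N(W) \subseteq V(G)$ for every $W$, so $r_H(0) = \lvert V(G)\rvert \ge \mun(k^*) = k^* + m_0$. For the inductive step, write $W = \possr(i-1)\setminus H_i$ and $w = \lvert W\rvert$; removing at most $m$ vertices gives $w \ge r_H(i-1) - m \ge (k^* + m_0) - m \ge k^*$, using $m \le m_0$. Since $\possr(i) = N(W)$ and $\mun$ is nondecreasing, $r_H(i) = \lvert N(W)\rvert \ge \mun(w) \ge \mun(k^*) = k^* + m_0$. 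Because $k^* + m_0 \ge 1$, this forces $r_H(i) > 0$ for all $i$, so the rabbit is never caught and $\Hun(G) > m_0$.

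The main obstacle is the inductive step, and specifically controlling the drop caused by the hunters' shots: deleting the at most $m$ shot vertices reduces the count by at most $m$, and choosing $k^*$ to maximize $\mun(k) - k$ is exactly what guarantees that $r_H(i-1) - m \ge k^*$ still lands where $\mun$ is large enough to restore the bound. Establishing monotonicity of $\mun$ is the other load-bearing step, since without it a lower bound on $w$ would not translate into a lower bound on $\mun(w)$.
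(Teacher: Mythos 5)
Your proof is correct and takes essentially the same approach as the paper: an induction showing that against $m \le m_0$ hunters the count $r_H(i)$ of possible rabbit positions can never fall below a maximizer of $\mun(k)-k$, so the rabbit is never caught. If anything, your write-up is more careful than the paper's: where the paper writes the inductive step as $r_H(i+1)\geq \mun(r_H(i))-m$ with the invariant $r_H(i)\geq k'$, your explicit proof that $\mun$ is nondecreasing, your strengthened invariant $r_H(i)\geq k^*+m_0$, and your application of $\mun$ to the post-shot set $\possr(i-1)\setminus H_i$ supply precisely the justification that the paper's inequality chain leaves implicit.
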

\begin{proof}
Let $k'$ be the smallest value of $k$ such that $\mun(k)-k=m_0$. We show by induction that $r_H(i) \geq k'$ for all $i$ if there are $m\leq m_0$ hunters. Note that $r_H(1) = \left \vert G \right \vert \geq k'$. Now 
\begin{align*}
r_H(i+1) &\geq \mun(r_H(i)) - m
\\&\geq \mun(k') - m
\\ &= k' + m_0 - m \geq k'.
\end{align*}
Since $r_H(i)\neq 0$ for any $i$, the rabbit is never caught.
\end{proof}
While this bound is fairly simple to state, it is not as easy to calculate. Let  $$u(G)=\max_k\{\mun(k)-k\}.$$ In section 4, we will identify this quantity for the hypercube. This bound is sometimes tight, a possibility we explore in the next section.

%----------------------------------------------------------------------------------------------------------------------------------------------
\section{Isoperimetric Nesting}
When it comes to bipartite graphs, there is additional power in the hunter strategies. We will always call the bipartition $E,O$ and say $E$ is the even part and $O$ is the odd part. This way, we will be able to talk about parities of vertices and sets of vertices. If the hunters knew in advance in which part of the graph the rabbit began, they could avoid wasting shots by only shooting at the part currently occupied by the rabbit. It turns out that if the hunters just pretend that they know the parity of the rabbit, that is sufficient. We will say that a strategy \textit{respects parity} if the hunters' shots alternate between parities and without loss of generality, a strategy respecting parity will start on the odd vertices.
\begin{lemma}[See for example \cite{Abramovskaya}]
Let $G$ be a bipartite graph with bipartition $E,O$. Any hunter strategy that respects parity and wins when the rabbit starts on an odd vertex can be extended to a strategy that wins for any starting rabbit position.
\label{lem:bip}
\end{lemma}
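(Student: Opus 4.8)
The plan is to reuse the given strategy, suitably time‑shifted, to sweep up exactly the rabbits it was not built to catch. Write $H=(H_1,\dots,H_m)$ for the parity‑respecting strategy that wins against odd‑starting rabbits, so that $H_j\subseteq O$ when $j$ is odd and $H_j\subseteq E$ when $j$ is even. A rabbit that starts on an odd vertex produces a walk $v_0v_1\cdots$ with $v_0\in O$, and since the shot $H_j$ is applied to $\possr(j-1)$, the hypothesis says precisely that no walk with $v_0\in O$ can satisfy $v_{j-1}\notin H_j$ for all $j=1,\dots,m$; equivalently, every such walk is hit within the first $m$ steps. The first fact I would record is the complementary one: since the shots alternate parity and are aligned to odd‑starting rabbits, an \emph{even}‑starting walk (one with $v_0\in E$) has $v_{j-1}$ of parity $(j-1)\bmod 2$, which is always opposite to the parity $j\bmod 2$ of $H_j$. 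Hence $H$ never hits any even‑starting rabbit: running $H$ catches all odd‑starting rabbits and leaves every even‑starting rabbit entirely untouched.

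To catch the survivors I would append a second copy of $H$, inserting a single empty shot $(\emptyset)$ beforehand exactly when needed so that the number of shots preceding the second copy is odd. Concretely, set $\tilde H = H\cdot H$ when $m$ is odd and $\tilde H = H\cdot(\emptyset)\cdot H$ when $m$ is even, and let $s$ denote the (odd) length of the prefix before the second copy, so that the second copy occupies shots $\tilde H_{s+j}=H_j$ for $j=1,\dots,m$. The point of making $s$ odd is realignment: just before shot $s+j$ an even‑starting rabbit sits on a vertex of parity $(s+j-1)\bmod 2$, and with $s$ odd this matches the parity $j\bmod 2$ of $H_j$, so the second copy is now aimed precisely at the even‑starting rabbits. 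Since the dummy is empty and the second copy reuses the same sets, $\max_i\lvert\tilde H_i\rvert=\max_i\lvert H_i\rvert$, so the extension costs no extra hunters.

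The crux is to verify that this realigned second copy genuinely finishes off every even‑starting walk, and here I would use a time‑shift bijection rather than any symmetry of $G$ (none need exist, as $G$ may have no automorphism swapping $E$ and $O$). Let $w=v_0v_1\cdots$ be any walk with $v_0\in E$. By the first paragraph it survives the prefix of $\tilde H$ automatically, so it is only at risk during the second copy, where survival demands $v_{s+j-1}\notin H_j$ for all $j=1,\dots,m$. Now consider the tail $w'=(v_s,v_{s+1},\dots)$: because $s$ is odd and $v_0\in E$, we have $v_s\in O$, so $w'$ is an odd‑starting walk, and its survival condition under $H$, namely $w'_{j-1}\notin H_j$, is literally the same list of inequalities. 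Since $H$ wins against odd‑starting rabbits, $w'$ cannot survive $H$; therefore $w$ cannot survive the second copy, and $w$ is caught.

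Putting the pieces together, every odd‑starting walk is hit within the first copy of $H$ and every even‑starting walk is hit within the second, so $r_{\tilde H}(M)=0$ for $M=\lvert\tilde H\rvert$ and $\tilde H$ wins from every starting position. The step I expect to demand the most care is the parity bookkeeping: getting the alignment right is what forces the dummy shot exactly when $m$ is even, and the whole argument rests on the clean observation that the tail of an even‑starting walk is an odd‑starting walk carrying the identical survival inequalities. The remaining ingredients—that the prefix leaves even‑starting rabbits untouched, and that appending further shots can never un‑catch an already‑caught rabbit—are routine.
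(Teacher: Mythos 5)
Your proof is correct and uses exactly the paper's construction: concatenate a second copy of $H$, inserting a single empty shot when $\lvert H\rvert$ is even so that the prefix has odd length ($H\cdot H$ versus $H\cdot\emptyset\cdot H$). The paper states this in one line; you have simply supplied the parity bookkeeping and the time-shift argument that the paper leaves implicit.
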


\begin{proof}
Suppose $H=(H_1,H_2,\ldots, H_n)$ wins when the rabbit starts in $O$. If $n$ is odd, then $H \cdot H$ is a winning strategy if the rabbit starts in either $E$ or $O$. If $n$ is even, then $H \cdot \emptyset \cdot H$ is a winning strategy if the rabbit starts in either $E$ or $O$.
\end{proof}

Because of this, it will be helpful to modify the idea of minimum unions of $k$ neighborhoods to restrict the collection of vertices to a single part. 

\begin{definition}[Minimum union of $k$ even or odd neighborhoods]
For a bipartite graph $G$, the \emph{minimum union of $k$ even neighborhoods} is \[\muneven(k):= \min\{\lvert N(V) \rvert: V \subseteq E, \lvert V \rvert = k\}\] and the \emph{minimum union of $k$ odd neighborhoods}
is \[\munodd(k):= \min\{\lvert N(V) \rvert: V \subseteq O, \lvert V \rvert = k\}.\] 

We then define analogously \[u^E(G)=\max_k\{\muneven(k)-k\}\] and \[u^O(G)=\max_k\{\munodd(k)-k\}.\]
\end{definition}
\begin{notation}
We denote by $\init(\leq,k)$ the initial segment of an order $\leq$ of size $k$.
\end{notation}

We now define \textit{isoperimetric nesting}. Essentially this says that there are total orders on $E$ and $O$ such that initial segments achieve $\mun$ and the neighborhood of an initial segment is an initial segment on the other side.
\begin{definition}[Isoperimetric nesting]\label{def:isonest}
We say that a bipartite graph $G$ has \textit{isoperimetric nesting} if there exists total orders $\leq_E$ on $E$ and $\leq_O$ on $O$ satisfying the following conditions for all $k$: 
\begin{enumerate}
\item $N(\init(\leq_E,k)) = \init(\leq_O, \muneven(k)),$
\item $N(\init(\leq_O,k)) = \init(\leq_E, \munodd(k)).$
\end{enumerate}

The orders $\leq_E$ and $\leq_O$ are together called the nest order. When the parity is clear, we will denote this order by $\leq$. 
\end{definition}

\begin{example}
The hypercube $Q^n$ with nest order defined by weightlex order on each part has isoperimetric nesting. We will define this order and prove this result in detail in section 4.
\end{example} 
 
We use isoperimetric nesting to generalize the proof strategy of \cite{Abramovskaya}, where the authors simultaneously prove that the grid has isoperimetric nesting and describe a strategy for it, though not in those terms. By disentangling these two ideas, we can develop a strategy for all graphs with isoperimetric nesting. 
\begin{theorem}\label{thm:main}
Let $G$ be a bipartite graph that has isoperimetric nesting. In addition, let $G$ satisfy the condition that $\vert u^E(G)-u^O(G) \vert \leq 1$. Then $\Hun(G)=m$, where $$m=\min\{u^E(G), u^O(G)\}+1.$$ 
\end{theorem}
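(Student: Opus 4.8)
The plan is to prove the two inequalities $\Hun(G) \le m$ and $\Hun(G) \ge m$ separately, in each case reducing the game to bookkeeping on the sizes $r_H(i)$ of the possible-position sets. After relabeling the two sides if necessary, I assume $u^O(G) \le u^E(G)$, so that $m = u^O(G)+1$ and the hypothesis reads $u^E(G) - u^O(G) \in \{0,1\}$. The only feature of isoperimetric nesting used throughout is that $\munodd$ and $\muneven$ are realized by initial segments and map initial segments to initial segments on the opposite side; a first consequence, recorded at the outset, is that $\munodd$ and $\muneven$ are non-decreasing (deleting a vertex from a minimizing segment can only shrink its neighborhood).

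\textbf{Upper bound.} I will describe a parity-respecting sweep that wins when the rabbit starts on an odd vertex and then invoke Lemma \ref{lem:bip} to extend it to all starts at no extra cost. Starting from $\possr(0)=O=\init(\leq_O,\lvert O\rvert)$, the strategy keeps the invariant that $\possr(i)$ is always an initial segment in the nest order. Given $\possr(i)=\init(\leq,\ell)$, the hunters shoot the top $m$ elements of this segment (or all of it, if $\ell\le m$, which empties $\possr$ and catches the rabbit), leaving $\init(\leq,\ell-m)$; by conditions (1)--(2) of Definition \ref{def:isonest} the next possible-position set is exactly $\init(\leq,\munodd(\ell-m))$ or $\init(\leq,\muneven(\ell-m))$ on the opposite side, so the invariant persists and the new length is $\munodd(\ell-m)$ or $\muneven(\ell-m)$. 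Tracking the length over two consecutive steps, one transition applies $\munodd$ and strictly decreases it, since $\munodd(\ell-m)\le(\ell-m)+u^O(G)=\ell-1$, while the other applies $\muneven$ and does not increase it, since $\muneven(\ell-m)\le(\ell-m)+u^E(G)\le\ell$ by $u^E(G)\le u^O(G)+1$. Hence the length strictly decreases every two steps and reaches $0$, so the rabbit is caught; this is exactly where the hypothesis $\lvert u^E(G)-u^O(G)\rvert\le 1$ is spent.

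\textbf{Lower bound.} Suppose the hunters use any strategy with at most $u^O(G)$ hunters; I must exhibit a rabbit that is never caught, and the rabbit may choose its starting parity. Fixing a starting side, the possible positions of such a rabbit form sets contained in $\possr(i)$, each lying entirely in one part, so at each step at most $u^O(G)$ vertices of the relevant part are deleted before the neighborhood is taken. Using that $\munodd,\muneven$ are the minimum neighborhood sizes and are non-decreasing, a routine induction gives $r_H(i)\ge\tilde r_i$, where $\tilde r_i$ is the deterministic worst-case sequence defined by $\tilde r_i=\munodd(\tilde r_{i-1}-u^O(G))$ or $\muneven(\tilde r_{i-1}-u^O(G))$, started from the size of the chosen side. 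It therefore suffices that, for one of the two starting sides, this sequence never reaches $0$. Grouping the recursion into two-step blocks yields a single non-decreasing map $\Phi$, namely $\Phi(s)=\muneven(\munodd(s-u^O(G))-u^O(G))$ or its analogue with the parities reversed. The orbit of $\Phi$ from the full side is non-increasing, and being an integer sequence it halts at the largest fixed point of $\Phi$ below its starting value; in particular it can never descend past any $s$ with $\Phi(s)\ge s$. Thus the lower bound reduces to the arithmetic claim that, for at least one choice of starting side, $\Phi$ admits some $s\ge 1$ with $\Phi(s)\ge s$, for then $r_H(i)\ge\tilde r_i\ge s\ge 1$ for all $i$ and no such strategy wins.

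\textbf{Main obstacle.} Establishing this positive fixed point is the heart of the argument and the step I expect to be hardest. The natural candidate is $s=p+u^O(G)$, where $p$ is the first index attaining $\munodd(p)-p=u^O(G)$; unwinding $\Phi$ there reduces $\Phi(s)\ge s$ to a comparison of the expansions of the two sides at their extremal segments, which does \emph{not} follow from the one-sided bounds $\munodd\le\mathrm{id}+u^O(G)$ and $\muneven\le\mathrm{id}+u^E(G)$ alone, since the peak of one part need not align with the favorable behavior of the other. The additional inputs that I expect to close the gap are the freedom to select the more favorable starting parity and the nesting identities themselves, which force $\muneven\circ\munodd\ge\mathrm{id}$ and $\munodd\circ\muneven\ge\mathrm{id}$ (the second neighborhood of an initial segment contains it); combined with $\lvert u^E(G)-u^O(G)\rvert\le 1$, these should pin down the expansion near the peak tightly enough to guarantee $\Phi(s)\ge s$ for a suitable side. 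Once the positive fixed point is secured we obtain $\Hun(G)>u^O(G)$, which matches the upper bound and yields $\Hun(G)=m$.
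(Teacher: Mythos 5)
Your upper bound is sound and is essentially the paper's own argument: keep $\possr(i)$ an initial segment of the nest order, shoot its top $m$ elements, note that the odd-side transitions strictly decrease the size (via $\munodd(\ell-m)\le \ell-m+u^O(G)=\ell-1$) while the even-side transitions do not increase it (via $\muneven(\ell-m)\le \ell-m+u^E(G)\le\ell$), and handle the unknown starting parity with Lemma \ref{lem:bip}; if anything you are more careful than the paper about shooting before the rabbit moves. The lower bound, however, is not a proof. Everything rests on the claim you yourself label the ``main obstacle'': that for some starting side the two-step map $\Phi$ admits an $s$ above the shooting threshold with $\Phi(s)\ge s$. Unwinding your candidate $s=p+u^O(G)$ on the odd side, this is the cross-parity inequality $\muneven(p)-p\ge u^O(G)$: the \emph{even} side must expand by at least $u^O(G)$ at the \emph{odd} side's peak index $p$. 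Nothing you have established controls one part's expansion function at the other part's extremal index, and you offer only the expectation that the nesting identities ``should'' close this; so the argument stops exactly where the theorem's content begins.

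This gap is genuine, and the toolkit you list is provably insufficient to close it. All of your ingredients --- monotonicity, $\muneven\le\mathrm{id}+u^E(G)$, $\munodd\le\mathrm{id}+u^O(G)$, the composition inequalities $\munodd\circ\muneven\ge\mathrm{id}$ and $\muneven\circ\munodd\ge\mathrm{id}$, the hypothesis $\vert u^E(G)-u^O(G)\vert\le 1$, and the freedom to pick the starting side --- are satisfied by the integer data $(\muneven(a))_{a=1}^{6}=(2,4,4,5,5,5)$ and $(\munodd(b))_{b=1}^{5}=(2,3,5,6,6)$, which give $u^E=u^O=2$; yet with $m'=2$ one checks $\Phi(s)<s$ for every $s$ on \emph{both} sides, so no barrier exists. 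Moreover these numbers occur as the initial-segment neighborhood sizes of an actual bipartite tree (edges $e_1o_1$, $e_1o_2$, $e_2o_1$, $e_2o_3$, $e_2o_4$, $e_3o_2$, $e_4o_3$, $e_4o_5$, $e_5o_3$, $e_6o_4$), in which neighborhoods of initial segments are initial segments of the other side but initial segments are \emph{not} minimizers, and that tree is caught by $2$ hunters (guard $e_2$ forever with one hunter; the other clears the three resulting paths in sequence), not $3$. So any correct completion must use the genuinely isoperimetric half of Definition \ref{def:isonest} --- that initial segments achieve the global minima $\muneven$, $\munodd$ over \emph{all} subsets of a part --- and not merely the order-theoretic consequences you record. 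Note also that the paper takes a different route here: it gets the lower bound by citing Proposition \ref{prop:bound} for the unrestricted $\mun$, with no parity fixed; since in any bipartite graph $\mun(k)=\min_{a+b=k}\{\muneven(a)+\munodd(b)\}$, making that citation rigorous still requires the cross-parity comparison $\max_k\{\mun(k)-k\}\ge\min\{u^E(G),u^O(G)\}$, which the paper leaves implicit --- but either way, that comparison is the heart of the lower bound, and your proposal does not supply it.
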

\begin{proof}
Suppose without loss of generality that $m=u^E(G)+1$. By Lemma \ref{lem:bip}, we may also assume that the rabbit starts on the even side of the graph, i.e. that at any time $2k$, $\possr(2k)\subseteq E$. We generate a winning hunter strategy under that assumption iteratively by $H_i=\{$last $m$ nest-ordered vertices of $\possr(i)\}$. We show inductively that $$\possr(i)=
\begin{cases}
\init(\leq_E, r_H(i)), i=2k, k \in \mathbb{Z}\\
\init(\leq_O, r_H(i)), i=2k+1, k \in \mathbb{Z}\\
\end{cases}$$. By Definition \ref{def:isonest}, $N(\possr(i))=N(\init(\leq_E,r_H(i)))=\init(\leq_O,r_H(i))$, which is an initial segment. Now $$\possr(i+1) = N(\possr(i))\setminus H_i = \init(\leq, \vert N(\possr(i))\vert - m),$$ since $H$ removes the tail of the initial segment, leaving something that is still an initial segment. But $\vert N(\possr(i))\vert - m =r_H(i+1)$, completing the induction.

By definition of $u^E(G)$,
\begin{align*}
\mun(r_H(i))-r_H(i)&\leq u^E(G)=m-1 \\
\mun(r_H(i))-m &< r_H(i) \\
\vert N(\possr(i)) \setminus H_{i+1} \vert &< r_H(i) \\
r_H(i+1) &< r_H(i).
\end{align*}
Further, since we assumed $\vert u^E(G) - u^O(G) \vert \leq 1$, we have that
\begin{align*}
\mun(r_H(i+1))-r_H(i+1) &\leq m \\
\mun(r_H(i+1))-m &\leq r_H(i+1) \\
\vert N(\possr (i+1)) \setminus H_{i+2} \vert &\leq r_H(i+1) \\
r_H(i+2) &\leq r_H(i+1).
\end{align*}
Therefore $r_H(i+2) < r_H(i)$. Since $\{r_H(2k)\}$ is a decreasing sequence of natural numbers, eventually there is a $k$ such that $r_H(k)=0$, and so the rabbit is caught. This upper bound, along with the lower bound from Proposition \ref{prop:mun}, proves the result.
\end{proof}

While the requirement that $u^E(G)$ and $u^O(G)$ differ by at most one is necessary for this proof, there is no indication that it is necessary for the result. However, all the natural places to apply this theorem satisfy this condition. 
 
As noted above, applying Theorem \ref{thm:main} to $m\times n$ grids (with $m>n$) with the nest-ordering on the vertices given by $(x,y) \leq_{grid} (x',y')$ if $x+y < x'+y'$ or if $x+y=x'+y'$ and $x<x'$, matches the bound given by \cite{Abramovskaya}. In the next section, we show that hypercubes have isoperimetric nesting and that the nest order is, in fact, the weightlex order. We go on to find the corresponding hunter number explicitly.

%----------------------------------------------------------------------------------------------------------------------------------------------
\section{Hunters in the Hypercube}
We turn our attention to the discrete hypercube, $Q^n$.
\begin{definition}
The discrete hypercube $Q^n$ is the graph with $$V(Q^n)=\{0,1\}^n$$ $$E(Q^n)=\{xy: \exists ! \text{ i such that } x_i\neq y_i\},$$ 
This is isomorphic to the graph with
$$V(Q^n)=\mathcal{P}([n])$$
$$E(Q^n)=\{xy: \vert x \triangle y \vert = 1\}$$
where $\triangle$ is the symmetric difference.

Given a vertex $v \in Q^n$, we denote by $\wt(x)$ the size of the corresponding subset of $[n]$. If $\wt(x)$ is even, we call $x$ an even vertex, and similarly define odd vertices.
\end{definition}
We will talk about vertices as sequences or sets interchangeably. In what follows, we take particular advantage of the layered structure of $Q^n$, where all vertices of weight $k$ comprise the $k^{th}$ layer, for $0\leq k\leq n$. 
\begin{prop}
$\Hun(Q^n)\leq \binom{n}{\lfloor \frac{n}{2} \rfloor}$
\end{prop}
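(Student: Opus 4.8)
The plan is to obtain this bound as a direct application of the homomorphism inequality in Proposition \ref{prop:hom}, using the weight map from the hypercube onto a path. Concretely, I would let $P$ denote the path on the weight-values $\{0,1,\ldots,n\}$ (so $P$ has $n+1$ vertices, with $k$ adjacent to $k\pm1$), and define $\varphi: Q^n \to P$ by $\varphi(v) = \wt(v)$. The first step is to check that $\varphi$ is a graph homomorphism: by definition of $Q^n$, any edge $uv$ flips exactly one coordinate (equivalently, $\lvert u \triangle v\rvert = 1$), so $\lvert \wt(u) - \wt(v)\rvert = 1$ and hence $\varphi(u)\varphi(v)$ is an edge of $P$. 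This is the one genuinely structural observation, and it is exactly the layered structure noted just before the statement.

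Next I would compute the maximum fiber size appearing in Proposition \ref{prop:hom}. The preimage $\varphi^{-1}(k)$ is precisely the $k^{th}$ layer of $Q^n$, the set of all weight-$k$ vertices, which has size $\binom{n}{k}$. Therefore
\[
k = \max_{0 \le j \le n} \lvert \varphi^{-1}(j)\rvert = \max_{0 \le j \le n}\binom{n}{j} = \binom{n}{\lfloor n/2 \rfloor},
\]
the central (largest) binomial coefficient.

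Finally, since $P$ is a path, the discussion of $P_n$ in the introduction gives $\Hun(P) = 1$. Plugging into Proposition \ref{prop:hom} then yields
\[
\Hun(Q^n) \le k\,\Hun(P) = \binom{n}{\lfloor n/2 \rfloor}\cdot 1 = \binom{n}{\lfloor n/2 \rfloor},
\]
as desired. I do not expect a real obstacle here: the result is essentially a corollary of the machinery already assembled, and the only points requiring care are confirming that the weight map lands in a \emph{path} (so that $\Hun$ of the target is $1$) and that its fibers are exactly the layers. Intuitively, this strategy lifts the single-hunter path sweep to the hypercube by shooting an entire layer at each step, driving the rabbit's possible weight monotonically toward an endpoint.
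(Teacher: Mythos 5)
Your proposal is correct and is exactly the paper's proof: the paper also applies Proposition~\ref{prop:hom} to the weight homomorphism $\phi(x) = \vert x \vert$ from $Q^n$ onto a path, with the fibers being the layers of size $\binom{n}{k}$. Your write-up merely spells out the verification (and, usefully, notes that the target path has $n+1$ vertices, where the paper's notation $P_n$ is slightly loose), but the argument is the same.
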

\begin{proof}
Apply Proposition~\ref{prop:hom} to the homomorphism $\phi: Q^n \rightarrow P_n$ defined by $\phi(x)=\vert x \vert$.
\end{proof}
The correct answer is reasonably close to this. We determine the correct answer by first proving that $Q^n$ has isoperimetric nesting using weightlex order. We now define this order.
\begin{definition}[Lexicographical order]
For $x,y \in Q^n$, we say $x$ is \emph{lexicographically less than} $y$, denoted $x \leq_{lex} y$, if $\min \{x\triangle y\} \in x $.
\end{definition}
\begin{example}
The lexicographical ordering on $Q^3$ is $$\{1,2,3\}<\{1,2\}<\{1,3\}<\{1\}<\{2,3\}<\{2\}<\{3\}<\emptyset .$$
\end{example}
\begin{definition}[Weightlex order] 
For vertices $x, y \in Q^n$, we say $x$ is \emph{weightlex less than} $y$, denoted $x\leq_{wlex} y$, if $|x| < |y|$ or $|x|= |y|$ and $x \leq_{lex} y$.
\end{definition}
\begin{example}
The weightlex ordering on $Q^3$ is
$$ \emptyset< \{1\}< \{2\}< \{3\}< \{1,2\}< \{1,3\}< \{2,3\}< \{1,2,3\}.$$
\end{example}
 Nest ordering needs to be defined separately on $E$ and $O$. We will use weightlex on each side, referred to as weightlex-even and weightlex-odd. When clear from context, we use $\preceq$ to denote both $\leq_{wlexeven}$ and $\leq_{wlexodd}$.
 \begin{example}
 The weightlex-even ordering on $Q^3$ is
 $$ \emptyset\prec \{1,2\}\prec \{1,3\}\prec \{2,3\}.$$
 The weightlex-odd ordering on $Q^3$ is
 $$  \{1\}\prec \{2\} \prec \{3\}\prec \{1,2,3\}.$$
 \end{example}
 In order to use Theorem \ref{thm:main}, we  show that the hypercube has isoperimetric nesting where the nesting order is weightlex order (respecting the partition). Harper showed in \cite{Harper} that weightlex order minimizes unions of closed neighborhoods in the hypercube. Korner and Wei proved the same thing for open neighborhoods in \cite{Korner}, but they did this by proving an analogue of the Kruskal-Katona form of Harper's original result.  We will show a direct proof here for open neighborhoods that follows the outline of Harper's original closed neighborhood proof. 
\begin{lemma}\label{lem:nlexislex}
Let $C$ be an initial segment of weightlex-even in $Q^n$. Then $N(C)$ is an initial segment of weightlex-odd. Similarly, if $C$ is an initial segment of weightlex-odd, then $N(C)$ is an initial segment of weightlex-even.
\end{lemma}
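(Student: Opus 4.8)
The plan is to reduce the statement to a single fact about one weight layer and then prove that fact with an explicit order-preserving map. I will write $C$ for an initial segment of weightlex-even (the weightlex-odd case is entirely symmetric). Since weightlex orders vertices first by weight, $C$ consists of all even-weight vertices of weight less than some even number $2j$, together with an initial segment $S$ of the weight-$2j$ layer under the within-layer lexicographic order. Because every neighbor of a weight-$w$ vertex has weight $w \pm 1$, I can analyze $N(C)$ one odd layer at a time.

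The first step is to check that all the intermediate odd layers come out full. For each odd weight $2\ell-1$ with $1 \le \ell \le j$, the complete even layer of weight $2\ell-2$ lies in $C$ (taking the empty set as the weight-$0$ layer when $\ell=1$), and the upper shadow of a complete layer is the complete layer above it, since every $(2\ell-1)$-set contains a $(2\ell-2)$-set. Hence $N(C)$ contains all of weights $1,3,\ldots,2j-1$, which is exactly an initial run of complete layers in weightlex-odd. The only remaining contribution is at weight $2j+1$, where $N(C)$ meets this layer in precisely the upper shadow of $S$, i.e.\ the set of weight-$(2j{+}1)$ vertices containing some member of $S$. So $N(C)$ is a weightlex-odd initial segment if and only if this upper shadow is a lexicographic initial segment of the weight-$(2j{+}1)$ layer.

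This reduces everything to the claim: if $S$ is a lexicographic initial segment of the weight-$w$ layer, then its upper shadow is a lexicographic initial segment of the weight-$(w{+}1)$ layer. To prove it, I first note that within a fixed layer the paper's order agrees with the usual lexicographic order on increasing sequences, so the lexicographically smallest $w$-subset of a vertex $B$ of weight $w+1$ is $f(B):=B\setminus\{\max B\}$ (deleting any smaller element retains a larger one and only raises the set in lex order). Because $S$ is downward closed under $\leq_{lex}$, a vertex $B$ lies in the upper shadow of $S$ exactly when its minimal $w$-subset $f(B)$ lies in $S$. It therefore suffices to show that $f$ is order-preserving: if $B' \leq_{lex} B$ then $f(B') \leq_{lex} f(B)$. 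Comparing the increasing sequences of $B'$ and $B$, the first coordinate of disagreement is either at position $\le w$, in which case dropping the last coordinate leaves this disagreement untouched, or at position $w+1$, in which case $f(B')=f(B)$; either way $f(B')\leq_{lex}f(B)$. Monotonicity of $f$ then forces $\{B : f(B)\in S\}$ to be downward closed, which is the claim.

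The conceptual content is small and lives in the final step; the main obstacle is bookkeeping rather than depth. The delicate points to get right are the layer-by-layer accounting in the reduction — in particular verifying that every intermediate odd layer is genuinely complete and that no contribution reaches weight $2j+1$ from above — together with the boundary behavior when $2j$ sits at or near the top of the cube, where the upper shadow of $S$ may be empty or the entire next layer. Once the reduction is in place, the shadow claim follows immediately from the explicit minimal-subset map $f$.
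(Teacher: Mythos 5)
Your proof is correct and follows essentially the same route as the paper's: both reduce to showing that the upper shadow of a lexicographic initial segment of the incomplete layer is again a lexicographic initial segment, using the observation that the lex-least lower neighbor of $B$ is $B\setminus\{\max B\}$ together with a case split on the first position of disagreement. The only difference is presentational --- the paper argues by contradiction with a pair $x \prec y$, $y \in N(C)$, $x \notin N(C)$, while you phrase the same step directly as monotonicity of the delete-the-maximum map.
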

\begin{proof}
It is clear that the neighborhood of an entire even layer of $Q^n$ is all of the adjacent odd layers of $Q^n$, so it is sufficient to check only the upper neighborhood of an incomplete layer. There exists $x,y\in Q^n$ with $\wt(x)=\wt(y)=m$ while $y\in N(C)$ and $x\notin N(C)$. 
Suppose $x \prec y$. Let $i$ be the first position where $x_i < y_i$. If $i \neq m$, then $y \setminus \{y_m\}$ is the lowest neighbor of $y$ in $C$, and $y \setminus \{y_m\} \geq x \setminus \{x_m\} $, so since $C$ is an initial segment of weightlex-even, $x \setminus \{x_m\} \in C$. But then $x \in N(C)$. If $i = m$, then $y \setminus \{y_m\}= x \setminus \{x_m\} \in C$, so $x \in N(C)$. Therefore, $N(C)$ is an initial segment of weightlex-odd. 
Note that the same proof applies if we switch weightlex-even with weightlex-odd throughout.
\end{proof}
\begin{theorem} \label{ijcompress}
If $A \subset Q^n_{even}$ is a set of size $k$ and $C$ is the initial weightlex--even segment of size $k$, then $\lvert N(A) \rvert \geq\lvert N(C)\rvert $.
\end{theorem}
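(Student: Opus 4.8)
The plan is to prove this by the method of \emph{compression}, mirroring Harper's argument but using weight-preserving moves so that we never leave the even part of $Q^n$. For a pair of coordinates $i<j$, define the $(i,j)$-compression $C_{ij}$ that replaces each $x\in A$ containing $j$ but not $i$ by $(x\setminus\{j\})\cup\{i\}$, provided the latter is not already in $A$, and fixes $x$ otherwise. Since $C_{ij}$ only swaps a $j$ for an $i$, it preserves $\wt(x)$, and hence sends even sets to even sets of the same size. The goal is to show that repeated compression, interleaved with a second move below, transforms $A$ into the initial weightlex-even segment $C$ without ever increasing the size of the neighborhood.

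First I would establish the technical heart of the argument: that $\lvert N(C_{ij}(A))\rvert \le \lvert N(A)\rvert$ for every $A$ and every $i<j$. The cleanest route is to prove the containment $N(C_{ij}(A))\subseteq C_{ij}(N(A))$ and then observe that $C_{ij}$ preserves cardinality, now applied to the odd set $N(A)$. This containment is verified by a local case analysis on a neighbor $y$ of $C_{ij}(A)$, tracking whether $y$ contains $i$, $j$, both, or neither, and checking in each case that the relevant preimage lies in $C_{ij}(N(A))$. I expect this to be the most delicate bookkeeping in the proof, since one must handle the interaction between compression of $A$ and compression of $N(A)$ at once.

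Next I would argue termination of the compression phase. The quantity $\sum_{x\in A}\sum_{i\in x} i$ strictly decreases under any nontrivial $C_{ij}$, so after finitely many compressions we reach a set $A^\ast$ with $\lvert A^\ast\rvert=\lvert A\rvert$, $\lvert N(A^\ast)\rvert\le\lvert N(A)\rvert$, and $A^\ast$ stable under every $(i,j)$-compression. Such a stable set is \emph{left-compressed}: within each weight layer it is exactly the lex-initial segment of that layer, which is precisely the structure to which Lemma \ref{lem:nlexislex} applies.

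The main obstacle is that compression preserves the number of vertices in each layer, so a left-compressed $A^\ast$ need not equal the bottom-heavy segment $C$; one must still compare even sets with different layer distributions. To close this gap I would use a cross-layer exchange: if $A^\ast\neq C$, take a vertex from the highest occupied layer and move it down to the lowest even layer that is not yet full, keeping the set even and of the same size, then re-compress within layers. By Lemma \ref{lem:nlexislex}, the neighborhoods of the affected layers are nested initial segments of weightlex-odd, and a direct comparison of two layers of differing widths shows this exchange does not increase $\lvert N(\cdot)\rvert$. Each exchange strictly decreases $\sum_{x\in A}\wt(x)$, so the process terminates at $C$, yielding $\lvert N(A)\rvert\ge\lvert N(A^\ast)\rvert\ge\lvert N(C)\rvert$. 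Making this exchange precise—in particular verifying the nesting inequality across layers of different sizes—is where I expect the remaining real work to lie.
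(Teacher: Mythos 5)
Your compression phase is fine in outline (that the elementary swaps $C_{ij}$ do not increase $\lvert N(\cdot)\rvert$ is classical), but the proof breaks at the step you defer to the end, and the break is fatal: the cross-layer exchange is simply not monotone. Take $n=10$ and
\[
A=\{\emptyset\}\cup\left\{x\in\binom{[10]}{2}:1\in x\right\}\cup\left\{x\in\binom{[10]}{6}:1\in x\right\}.
\]
Each layer of $A$ is lex-initial, $A$ is fixed by every swap $C_{ij}$, and layer $0$ is full, so your procedure goes straight to the exchange: the highest occupied layer is $6$, the lowest non-full even layer is $2$, so you move the lex-last element $y=\{1,6,7,8,9,10\}$ of layer $6$ down to $x=\{2,3\}$. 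Before the exchange, $N(A)$ consists of all of layer $1$ ($10$ vertices), the $3$-sets containing $1$ ($36$), all of layer $5$ ($252$), and the $7$-sets containing $1$ ($84$), so $\lvert N(A)\rvert=382$. After the exchange, layer $7$ is still fully covered ($84$), and layer $5$ loses only the single vertex $\{6,7,8,9,10\}$, since every other $5$-set still lies under a surviving $6$-set containing $1$ ($251$); but layer $3$ gains the seven new vertices $\{2,3,c\}$, $c\geq 4$, giving $43$ there. So $\lvert N(A')\rvert=10+43+251+84=388>382$, and the resulting set is still left-compressed, so re-compression does nothing. The structural reason is that the lex-last vertex of a large top layer has almost no private neighborhood, while an incoming vertex in a sparse low layer opens up an entire new upper shadow; "one vertex at a time from the top" can strictly increase the boundary, so your termination argument has no monotone quantity to run on. This cross-layer comparison is not a routine check of two nested segments; it is essentially the entire content of the theorem.

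There is a second, smaller gap upstream: being fixed by all swaps $C_{ij}$ does not make each layer lex-initial. The family $\{12,13,23\}$ in layer $2$ of $Q^4$ is fixed by every $C_{ij}$ but is not the lex-initial segment $\{12,13,14\}$, so Lemma \ref{lem:nlexislex} does not yet apply to your $A^\ast$; left-compressed families form a strictly larger class than initial segments. The paper's proof is designed to avoid both problems at once: its $(i,j)$-compression is not the coordinate swap but a codimension-two operation that deletes $\{i,j\}$, splits $A$ into four pieces inside $Q^{n-2}$, and replaces each piece by a weightlex-initial segment there, with $\lvert N(C_{i,j}(A))\rvert\leq\lvert N(A)\rvert$ proved by induction on $n$ (Lemma \ref{lem:comp1}). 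Since weightlex-initial segments are bottom-heavy, this compression redistributes vertices across layers automatically, so no exchange step is needed; what remains is a finite structural analysis of sets compressed in this stronger sense that are still not initial segments (Lemmas \ref{intandunion}--\ref{lem:oddcompression}), which turn out to differ from an initial segment in essentially one complementary pair of vertices. To salvage a layer-by-layer argument like yours, you would need genuine shadow inequalities of Kruskal--Katona type to control the exchange, i.e., you would be re-proving Harper's theorem by a harder route.
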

Our approach to proving this result will be to use compression to simplify the sets $A \subset Q^n_{even}$ that we need to check. 

\begin{definition}[$(i,j)$-Compression] Fix $\{i,j\} \subseteq [n]$. Let $A \subset Q^n_{even}$. From $A$, build auxiliary sets $A_{0,0}, A_{1,0}, A_{0,1}, A_{1,0} \in Q^{n-2}$, where 
\begin{eqnarray*}
 A_{0,0}& =& \{ x :x \in A, i \notin x, j \notin x\}, \\
 A_{0,1}& =& \{ x \setminus \{i\} :x \in A, i \in x, j \notin x\}, \\
 A_{1,0} &= &\{x \setminus \{j\}: x \in A, i \notin x, j \in x\}, \textrm{ and}\\
 A_{1,1} &=& \{x \setminus \{i,j\}: x \in A, i \in x, j \in x \}.
\end{eqnarray*}
Since each element of $A$ contributes exactly once to $\{A_{0,0}, A_{0,1}, A_{1,0}, A_{1,1}\}$, we can think of the sets $A_{0,0}, A_{0,1}, A_{1,0}, A_{1,1}$ as a decomposition of $A$.

The \emph{$(i,j)$-compression of $A$}, denoted $C_{i,j}(A)=B$ is the set $B$ such that 
\begin{eqnarray*}
 B_{0,0} & = & \init(<_{wlexeven},\vert A_{0,0} \vert), \\
 B_{0,1} & = &\init(<_{wlexodd},\vert A_{0,1} \vert), \\
 B_{1,0} & = & \init(<_{wlexodd},\vert A_{1,0} \vert), \textrm{ and} \\
 B_{1,1} & = & \init(<_{wlexeven},\vert A_{1,1} \vert).
\end{eqnarray*}

If $C_{i,j}(A)=A$, we say that $A$ is \emph{$(i,j)$-compressed}. If $A$ is \emph{$(i,j)$-compressed} for all $i,j$, then we say $A$ is \emph{compressed}. 
\end{definition}
\begin{lemma}
For any $\{i,j\}\subseteq [n]$, $C_{i,j}(A)$ satisfies $$\vert C_{i,j}(A) \vert = \vert A \vert \text{ and } \vert N(C_{i,j}(A))\vert \leq \vert N(A) \vert.$$
\label{lem:comp1}
\end{lemma}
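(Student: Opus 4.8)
The first claim is immediate: by construction $|B_{s,t}| = |A_{s,t}|$ for each of the four blocks $(s,t)\in\{0,1\}^2$, and since these blocks partition $A$ (respectively $B$) according to the membership of $i$ and $j$, summing gives $|C_{i,j}(A)| = |B| = |A|$.

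For the neighborhood bound the plan is to decompose $N(A)$ and $N(B)$ along the same two coordinates and compare block by block. For a set $S \subseteq Q^n_{even}$ and $(s,t)\in\{0,1\}^2$, let $N(S)_{s,t}\subseteq Q^{n-2}$ denote the projection of those $y\in N(S)$ whose $(j,i)$-pattern is $(s,t)$, matching the indexing convention used for $A_{s,t}$. A vertex $y$ with pattern $(s,t)$ lies in $N(A)$ precisely when it is adjacent to some element of $A$ either by flipping a coordinate of $[n]\setminus\{i,j\}$, by flipping $i$, or by flipping $j$; unwinding these three cases yields the identity
\[ N(A)_{s,t} = N_{Q^{n-2}}(A_{s,t}) \cup A_{s,1-t} \cup A_{1-s,t},\]
and the analogous identity for $B$. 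First I would verify this formula carefully, as it is the combinatorial heart of the argument.

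The key observation is that within each block the three sets on the right-hand side share a common parity in $Q^{n-2}$ (for the blocks $(0,0),(1,1)$ they are odd, for $(0,1),(1,0)$ they are even), so after passing to $B$ they are all comparable under a single weightlex order. Indeed, each $B_{s,t}$ is an initial weightlex segment of the correct parity, and by Lemma \ref{lem:nlexislex} so is $N_{Q^{n-2}}(B_{s,t})$ (of the opposite parity); hence $N(B)_{s,t}$ is a union of three initial segments of one order, so it equals the largest of them and $|N(B)_{s,t}| = \max\{|N(B_{s,t})|,\,|B_{s,1-t}|,\,|B_{1-s,t}|\}$. For $A$ the same union gives only the weaker $|N(A)_{s,t}| \ge \max\{|N(A_{s,t})|,\,|A_{s,1-t}|,\,|A_{1-s,t}|\}$.

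To close the gap it remains to compare $|N(B_{s,t})|$ with $|N(A_{s,t})|$. Since $B_{s,t}$ is the initial weightlex segment of $Q^{n-2}$ of size $|A_{s,t}|$ and the appropriate parity, the inequality $|N(B_{s,t})| \le |N(A_{s,t})|$ is exactly Theorem \ref{ijcompress} one dimension lower, so I would run the whole development as an induction on $n$ (with trivial base cases $n\le 2$, where each even or odd initial segment is a full layer or empty), taking Theorem \ref{ijcompress} in $Q^{n-2}$ as the inductive hypothesis. Granting this, the equalities $|B_{s,t}|=|A_{s,t}|$ give $|N(B)_{s,t}| \le \max\{|N(A_{s,t})|,\,|A_{s,1-t}|,\,|A_{1-s,t}|\} \le |N(A)_{s,t}|$ for every block, and summing over the four blocks yields $|N(C_{i,j}(A))| \le |N(A)|$. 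The main obstacle is getting the block identity and the parity bookkeeping exactly right; once the three summands in each block are recognized as nested initial segments for $B$, the collapse of the union to a maximum is what drives the inequality, with the only external input being the lower-dimensional optimality supplied by the induction.
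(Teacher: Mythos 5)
Your proof is correct and takes essentially the same approach as the paper's: the same decomposition of $N(A)$ into four blocks by the $(i,j)$-pattern with the identity $N(A)_{s,t} = N_{Q^{n-2}}(A_{s,t}) \cup A_{s,1-t} \cup A_{1-s,t}$, the same collapse of each block of $N(B)$ to a maximum of nested initial segments, and the same induction on $n$ using Theorem~\ref{ijcompress} in $Q^{n-2}$ to compare $\lvert N(B_{s,t})\rvert$ with $\lvert N(A_{s,t})\rvert$. The only difference is presentational: the paper treats block $(0,0)$ explicitly and handles the rest ``without loss of generality,'' while you carry out the parity bookkeeping for all four blocks and make the mutual-induction structure explicit.
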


\begin{proof}
Let $C_{i,j}(A)=B$. Since $\vert B_{\iota,\epsilon}\vert=\vert A_{\iota, \epsilon}\vert $ for $\iota, \epsilon \in \{0,1\}$, it is clear that $\vert B \vert = \vert A \vert$.\\ We can write
\begin{align*}
\vert N(A) \vert=&\vert(N(A_{0,0})\cup A_{1,0} \cup A_{0,1})\vert + \vert(N(A_{1,0})\cup A_{1,1} \cup A_{0,0})\vert
\\&+ \vert(N(A_{0,1})\cup A_{1,1} \cup A_{0,0})\vert + \vert(N(A_{1,1})\cup A_{1,0} \cup A_{0,1})\vert.
\end{align*}
and $N(B)$ similarly. Without loss of generality, let us look at the first of these terms.

We know that the neighborhood of an initial segment is also an initial segment, so $\vert N(B_{0,0}) \vert \leq \vert N(A_{0,0})\vert$ by induction on $n$. Because $B_{0,1}, B_{1,0}$, and $N(B_{0,0})$ are all initial segments of weightlex-odd order in $\mathcal{P}([n-2])$, they are nested in each other. So 
\begin{align*}
\lvert N(B_{0,0})\cup B_{1,0} \cup B_{0,1} \rvert &= \max\{ \lvert N(B_{0,0}) \rvert , \lvert B_{1,0} \rvert, \lvert B_{0,1}\rvert \}
\\ &= \max \{ \lvert N(B_{0,0}) \rvert , \lvert A_{1,0} \rvert, \lvert A_{0,1}\rvert\}
\\ &\leq \max \{ \lvert N(A_{0,0}) \rvert , \lvert A_{1,0} \rvert, \lvert A_{0,1}\rvert\}
\\ &= \lvert N(A_{0,0}) \cup A_{1,0} \cup  A_{0,1}\rvert.
\end{align*}

Therefore, $\lvert N(B) \rvert \leq \lvert N(A) \rvert$. 
\end{proof}
\begin{prop}
If $A \subset Q^n_{even}$, then there exists a sequence $(A_0, A_1, \ldots, A_k)$ for some $k$ with \begin{enumerate}
\item $\vert A_{i+1} \vert = \vert A_i \vert$ for all $i$,
\item $\vert N(A_{i+1})\vert \leq \vert N(A_i) \vert$ for all $i$, and
\item $A_k$ is compressed.
\end{enumerate}
\end{prop}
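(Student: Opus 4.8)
The plan is to repeatedly apply the $(i,j)$-compression operator until we reach a compressed set, and to argue that this process terminates. By Lemma~\ref{lem:comp1}, each single application of $C_{i,j}$ preserves cardinality (condition 1) and does not increase the neighborhood size (condition 2), so conditions 1 and 2 will hold automatically along any sequence built by successive compressions. The only real content is to produce a \emph{finite} such sequence whose final term is compressed for all pairs $\{i,j\}$ simultaneously; this is condition 3, and it is where the work lies.

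First I would fix a potential function that strictly decreases under any nontrivial compression. The natural choice is the sum of the weightlex-ranks of the elements of $A$, that is, $\Phi(A) = \sum_{x \in A} \operatorname{rank}(x)$, where $\operatorname{rank}(x)$ is the position of $x$ in the weightlex order on its part. The key observation I would verify is that $C_{i,j}$ replaces each block $A_{\iota,\epsilon}$ by the corresponding \emph{initial} weightlex segment of the same size in $\mathcal{P}(n-2)$, and that passing from the block decomposition back to $Q^n$ is rank-monotone in an appropriate sense. Concretely, I would show that $\Phi(C_{i,j}(A)) \leq \Phi(A)$, with equality if and only if $A$ is already $(i,j)$-compressed. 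Since $\Phi$ takes values in the nonnegative integers, this guarantees that we cannot apply nontrivial compressions forever.

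The algorithm is then: starting from $A_0 = A$, at each step check whether $A_i$ is $(i,j)$-compressed for every pair; if so, set $k=i$ and stop; otherwise there is some pair $\{i,j\}$ with $C_{i,j}(A_i) \neq A_i$, and we set $A_{i+1} = C_{i,j}(A_i)$, which strictly decreases $\Phi$. Because $\Phi$ is a strictly decreasing sequence of nonnegative integers, the process halts after finitely many steps at a set $A_k$ that is $(i,j)$-compressed for all $\{i,j\}$, i.e.\ compressed, giving condition 3.

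The main obstacle I anticipate is the strict-decrease claim for $\Phi$, and in particular getting the ``equality iff already compressed'' direction exactly right. The subtlety is that $C_{i,j}$ compresses within each of the four blocks separately using initial segments in $\mathcal{P}(n-2)$, so I must confirm that the block-wise rank sum relates cleanly to the global weightlex rank on $Q^n$, and that a block which is not already an initial segment genuinely has strictly larger rank sum than the initial segment of the same size. One must also rule out the degenerate worry that a sequence of compressions on different pairs could cycle; this is precisely what the monotone integer potential prevents, since a cycle would force $\Phi$ to return to a previous value while never increasing. A minor care point is that the parity-respecting definition of the blocks ($B_{0,0}, B_{1,1}$ use weightlex-even, while $B_{0,1}, B_{1,0}$ use weightlex-odd) must be tracked consistently, but this does not affect the overall termination argument.
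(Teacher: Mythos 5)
Your proposal is correct and follows essentially the same route as the paper: iterate $(i,j)$-compressions, invoke Lemma~\ref{lem:comp1} for conditions 1 and 2, and use the sum of weightlex positions of the elements as a strictly decreasing nonnegative integer potential to force termination at a compressed set. The paper states the strict decrease of $\sum_{v \in A_l} \pos(v)$ without further justification, so your flagged verification of the ``strictly decreases unless already $(i,j)$-compressed'' step is exactly the detail the paper leaves implicit.
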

\begin{proof}
Let $A=A_0$. For $A_{l}$, if there is a pair $(i,j)$ for which $A_{l}$ is not $(i,j)$ compressed for a given $i,j$, set $A_{l+1}=C_{i,j}(A_l)$. At each step, $\sum_{v \in A_l} \pos(v)$ is strictly decreasing, where $\pos(v)$ denotes the position in weightlex ordering, so this process much terminate at some $A_k$ which is $(i,j)$-compressed for all $i,j$.
\end{proof}

In order to show that an $(i,j)$-compressed set is no better than an initial segment, we have a series of lemmas.
For the following lemmas, let $B$ be $(i,j)$-compressed for all $i,j$ but not be an initial segment of weightlex-even. In addition, let $C$ be the initial segment of weightlex-even order with size $\lvert B \rvert$.
\begin{lemma}\label{intandunion}
Suppose $B \subseteq Q^n_{even}$, where $B$ is compressed but not an initial segment of weightlex even. Let $x \in Q^n_{even} \setminus B$ and $y \in B$ with $x \prec y$. Then exactly one of the following is true:
\begin{enumerate}
\item $\lvert x \cap y\rvert =1$ and $x \cup y=\left[n\right]$
\item $\lvert x \cap y\rvert =0$ and $x \cup y=\left[n\right]\setminus\{k\}$ for some $k$
\item $x =y^c$.
\end{enumerate}
\end{lemma}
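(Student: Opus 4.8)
The plan is to reduce the trichotomy to a single overlap statement and then to exclude all larger overlaps using compression. Call a coordinate $\ell \in [n]$ an \emph{agreement} of the pair $(x,y)$ if $\ell \in x \cap y$ or $\ell \notin x \cup y$, and a \emph{disagreement} otherwise. Writing $a = \lvert [n] \setminus (x \cup y) \rvert$ and $d = \lvert x \cap y \rvert$, the number of agreements is $a + d$. A direct check shows the three listed alternatives correspond exactly to $(d,a) = (1,0)$, $(d,a)=(0,1)$, and $(d,a)=(0,0)$: if $a+d = 1$ with the one agreement a common element then $\lvert x \cap y \rvert = 1$ and $x \cup y = [n]$ (alternative (1)); if $a+d=1$ with the one agreement a common non-element then $x \cap y = \emptyset$ and $x \cup y = [n] \setminus \{k\}$ (alternative (2)); and if $a+d=0$ then every coordinate is a disagreement and $x = y^c$ (alternative (3)). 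These three values of $(d,a)$ are distinct, so at most one alternative can hold; conversely, once we know $a+d \le 1$, the pair $(d,a)$ must be one of them, so exactly one holds. Thus it suffices to prove that $x$ and $y$ agree on at most one coordinate.

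First I would record the key consequence of compression. Suppose toward a contradiction that $x$ and $y$ agree on two coordinates $i$ and $j$. Then $x$ and $y$ have the same membership pattern on $\{i,j\}$, so in the $(i,j)$-decomposition they fall into the same block, with projections $\tilde x, \tilde y \in Q^{n-2}$ obtained by deleting coordinates $i$ and $j$. Because $i$ and $j$ are agreements, deleting them changes $\lvert x \rvert$ and $\lvert y \rvert$ by the same amount (by $0$ if both are common non-elements, by $2$ if both common elements, by $1$ in the mixed case), so $\tilde x$ and $\tilde y$ share a parity and $\lvert x \rvert - \lvert y \rvert = \lvert \tilde x \rvert - \lvert \tilde y \rvert$. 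Moreover neither $i$ nor $j$ lies in $x \triangle y$, so the order-preserving relabeling of $[n] \setminus \{i,j\}$ onto $[n-2]$ carries $\min(x \triangle y)$ to $\min(\tilde x \triangle \tilde y)$ and preserves its membership. Hence deleting $i$ and $j$ preserves weightlex order on the relevant side, and in particular $x \prec y$ yields $\tilde x \prec \tilde y$.

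Now I would invoke that $B$ is $(i,j)$-compressed: the block containing $\tilde y$ is an initial segment of the appropriate (even or odd) weightlex order in $Q^{n-2}$. Since $y \in B$, the projection $\tilde y$ lies in this block, and since $\tilde x \prec \tilde y$, the initial-segment property forces $\tilde x$ into the block as well; reinstating the $\{i,j\}$-coordinates then gives $x \in B$, contradicting $x \notin B$. Therefore $x$ and $y$ agree on at most one coordinate, i.e. $a + d \le 1$, and the reduction above finishes the proof. I expect the main obstacle to be the order-preservation step for coordinate deletion: one must verify carefully that both the weight comparison and the lexicographic tie-break survive the relabeling, and keep the even/odd parity bookkeeping consistent with the block into which $x$ and $y$ fall. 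Everything else is routine once this order isomorphism is in hand.
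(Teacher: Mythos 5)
Your proof is correct, and its engine is the same as the paper's: locate two coordinates on which $x$ and $y$ have identical membership, observe that $x$ and $y$ then project into the same block of the $(i,j)$-decomposition, and use the initial-segment property of that block (from $(i,j)$-compression) together with $\tilde x \prec \tilde y$ to force $x \in B$, a contradiction. The useful difference is in coverage. The paper's proof treats only two kinds of pairs: pairs $\{i,j\} \subseteq x \cap y$ (giving $\lvert x \cap y \rvert \le 1$) and pairs disjoint from $x \cup y$ (giving $\lvert x \cup y \rvert \ge n-1$), and then asserts the trichotomy. Those two bounds alone still permit a fourth configuration, $\lvert x \cap y \rvert = 1$ together with $x \cup y = [n]\setminus\{k\}$; note that this configuration has $\lvert x \cap y\rvert + \lvert x \cup y \rvert = n$, so when $n$ is even it cannot be dismissed on parity grounds, and excluding it is exactly what Lemma \ref{lem:evencompress} needs, since there parity only kills cases (1) and (2). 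Your ``agreement'' formulation handles all three kinds of agreeing pairs uniformly --- both common elements, both common non-elements, and the mixed pair consisting of one common element and one common non-element --- and it is the mixed pair that eliminates the fourth configuration. You also spell out the step the paper leaves implicit: deleting two agreement coordinates changes $\lvert x \rvert$ and $\lvert y \rvert$ by the same amount and does not disturb $\min(x \triangle y)$, so the weightlex comparison (and the even/odd side of the order used for the block) survives projection. In short, your write-up is a tightened version of the paper's argument that closes a real, if small, gap in the published proof.
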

\begin{proof}
Note that if $\lvert x \cap y \rvert \geq 2$, then for a pair $i,j$ in their intersection, $C_{i,j}(B) \neq B$. This is a contradiction, since we assumed that $B$ is compressed. Therefore $\lvert x \cap y \rvert \leq 1$. 
If for any pair $i,j$, $\{i,j\} \nsubseteq y$ and $\{i,j\} \nsubseteq x$, then $C_{i,j}(B) \neq B$. Any pair then must intersect either $x$ or $y$, $\vert x \cap y \vert \geq n-1$. Therefore, the only possibilities are the three listed above.
\end{proof}
\begin{notation}
We denote the collection of $k$-sets of $[n]$ by $\binom{[n]}{k}$. The collection of sets of size at most $k$ in $[n]$ will be denoted $\binom{[n]}{\leq k}$.  
\end{notation}

\begin{lemma}\label{lem:evencompress}
Suppose $B \subseteq Q^n_{even}$ for $n$ even, where $B$ is compressed but not an initial segment of weightlex even. Then either 
\begin{enumerate}
\item $\displaystyle{B=\binom{\left[n\right]}{\leq \frac{n-2}{2}} \big\backslash  \left\{\frac{n+4}{2},\ldots,n\right\} \cup \left\{1,2,\ldots,\frac{n+2}{2}\right\} \text{ or }}$
\item $\displaystyle{B=\binom{\left[n\right]}{\leq \frac{n-2}{2}} \cup \left\{ x \in \binom{\left[n\right]}{\frac{n}{2}}: 1 \in x\right\} \setminus \left\{1,\frac{n+4}{2},\ldots, n\right\} \cup \left\{2,3,\ldots,\frac{n+2}{2}\right\}.}$ 
\end{enumerate}
Hence, if we let $C = \init(\preceq, \vert B \vert )$, $\lvert N(B) \rvert \geq \lvert N(C) \rvert$.
\end{lemma}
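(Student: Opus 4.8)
The plan is to use the evenness of $n$ to collapse Lemma~\ref{intandunion} almost completely, read off the rigid shape of $B$, and then locate its single defect by a weight--parity argument before computing the neighborhood directly.

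First I would observe that for $n$ even, options (1) and (2) of Lemma~\ref{intandunion} are impossible: in each case $\lvert x\rvert+\lvert y\rvert=\lvert x\cup y\rvert+\lvert x\cap y\rvert$ equals $n+1$ and $n-1$ respectively, both odd, contradicting that $x,y$ have even weight. Hence every pair $x\in Q^n_{even}\setminus B$, $y\in B$ with $x\prec y$ satisfies $x=y^{c}$. Letting $x$ be the $\preceq$-least non-member of $B$, everything $\prec x$ lies in $B$, and since $B$ is not an initial segment some member $y\succ x$ exists, forcing $y=x^{c}$; as $x^{c}$ is the unique such member we obtain
\[
B=\{z:z\prec x\}\cup\{x^{c}\},\qquad C:=\init(\preceq,\lvert B\rvert)=\{z:z\preceq x\},
\]
so $B=(C\setminus\{x\})\cup\{x^{c}\}$, and $x\prec x^{c}$ gives $\lvert x\rvert\le n/2$.

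Next I would pin down $x$ using that $B$ is compressed, so $B_{1,1}$ is an initial segment of weightlex-even in $Q^{n-2}$ for every pair $i,j$. Choosing $i,j\in x^{c}$, the only member of $B$ of weight exceeding $\lvert x\rvert$ is $x^{c}$, so $B_{1,1}$ contains $x^{c}\setminus\{i,j\}$ of weight $\lvert x^{c}\rvert-2$, while every other contributor has weight at most $\lvert x\rvert-2$. If $\lvert x\rvert\le n/2-2$, these weights differ by at least $4$, and since $B$ has no members of weight strictly between $\lvert x\rvert$ and $\lvert x^{c}\rvert$, the set $B_{1,1}$ omits an even weight below its top weight and cannot be an initial segment---a contradiction. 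Thus $\lvert x\rvert\in\{n/2-1,\,n/2\}$, and since $\lvert x\rvert$ is even the residue of $n$ mod $4$ selects one: $\lvert x\rvert=n/2-1$ (case (1)) when $n\equiv2$, and $\lvert x\rvert=n/2$ (case (2)) when $n\equiv0$. In case (1), $x^{c}\setminus\{i,j\}$ is moreover the \emph{unique} top-weight element of $B_{1,1}$, hence must be the lex-first set of its weight; requiring this for every $i,j\in x^{c}$ forces $x^{c}$ to use the smallest coordinates, namely $x^{c}=\{1,\dots,n/2+1\}$ and $x=\{n/2+2,\dots,n\}$, exactly as displayed.

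The main obstacle is the exact determination in case (2), where $x$ and $x^{c}$ both have the middle weight $n/2$: now the weight-$n/2$ members of $B$ lying below $x$ also feed into $B_{1,1}$, so it has no unique top-weight element and the clean ``lex-first'' deduction must be replaced by a careful analysis of the lex-initial structure of $B$ inside the weight-$n/2$ layer under the layer-preserving compressions. Carrying this out shows $x$ is the lex-largest weight-$n/2$ set containing the coordinate $1$, i.e. $x=\{1,n/2+2,\dots,n\}$ and $x^{c}=\{2,\dots,n/2+1\}$, matching case (2). Once $B$ is explicit the closing inequality is routine: $x$ has no private neighbor, since each upper neighbor $x\cup\{p\}$ retains a different neighbor inside $C$, whence $N(C\setminus\{x\})=N(C)$; meanwhile $x^{c}$ contributes genuinely new vertices---upper neighbors of weight $n/2+2$ in case (1), and weight-$(n/2+1)$ neighbors avoiding the coordinate $1$ in case (2)---none lying in $N(C)$. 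Therefore $\lvert N(B)\rvert\ge\lvert N(C)\rvert$ (in fact strictly), completing the proof.
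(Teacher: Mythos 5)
Your opening reduction is exactly right and matches the paper: for $n$ even, parity rules out cases (1) and (2) of Lemma~\ref{intandunion}, so every non-member of $B$ lying below a member is its complement, and taking $x$ to be the $\preceq$-least non-member gives $B=\{z:z\prec x\}\cup\{x^{c}\}$. Your compression argument pinning $\lvert x\rvert\in\{n/2-1,\,n/2\}$ via the weight gap in $B_{1,1}$ is also sound, as are the case-(1) identification (the unique top-weight element of $B_{1,1}$ must be lex-first, for every pair $i,j\in x^{c}$) and the closing neighborhood computation. The problem is case (2), i.e.\ $n\equiv 0\pmod 4$: you correctly call the identification of $x$ there ``the main obstacle,'' and then you do not actually carry it out --- ``carrying this out shows $x$ is the lex-largest weight-$n/2$ set containing the coordinate $1$'' is an assertion, not an argument. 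Since $x$ and $x^{c}$ now sit in the same layer, the weight-$n/2$ members of $B$ below $x$ feed into $B_{1,1}$ at the same top weight as $x^{c}\setminus\{i,j\}$, so the ``unique top element'' device from case (1) genuinely breaks, and you supply no replacement. As written, the classification is unproven for all $n$ divisible by $4$, which is a real gap.

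The frustrating part is that the missing step follows in one line from your own first paragraph, and it is precisely how the paper closes the argument. From $B=\{z:z\prec x\}\cup\{x^{c}\}$, any $z$ with $x\prec z\prec x^{c}$ would be a non-member lying below the member $x^{c}$, forcing $z=(x^{c})^{c}=x$, a contradiction; hence $x$ and $x^{c}$ are \emph{consecutive} in weightlex-even order, and one only has to list the consecutive complementary pairs. In case (2) both lie in $\binom{[n]}{n/2}$; since they are disjoint, $1\in x\triangle x^{c}$, so $x\prec x^{c}$ forces $1\in x$. In lex order every weight-$n/2$ set containing $1$ precedes every one avoiding $1$, so a set containing $1$ whose immediate successor avoids $1$ must be the lex-last set containing $1$: thus $x=\{1,\tfrac{n+4}{2},\dots,n\}$ and $x^{c}=\{2,\dots,\tfrac{n+2}{2}\}$, which is exactly the pair in case (2). (The same adjacency observation also dispatches case (1) instantly: consecutive across layers means $x$ is lex-last in layer $\tfrac{n-2}{2}$ and $x^{c}$ is lex-first in layer $\tfrac{n+2}{2}$, and complementarity pins both sets.) With that substitution your proof is complete, and is in fact slightly more informative than the paper's, since it records which residue of $n$ modulo $4$ produces which exceptional set and proves, rather than merely asserts, that only these two configurations occur.
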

\begin{proof}
Let $x$ and $y$ be a pair satisfying the conditions of Lemma \ref{intandunion}. Considering the different possibilities in Lemma \ref{intandunion}, we can only be in case 3, because $\lvert x\rvert + \lvert y \rvert$ is even, and $n+1$ and $n-1$ are odd. So we must have that any $x \notin B$ and $y \in B$ with $x \prec y$ satisfies $x=y^c$. But then there is only one $x \notin B$ with $x \prec y$ for some $y \in B$. So $x$ and $y$ are adjacent in weightlex-even  order. There are only two adjacent pairs that are complements (which are given in the statement of the lemma). It is also simple to check in this case that $\lvert N(B) \rvert \geq \lvert N(C) \rvert$.
\end{proof}
\begin{lemma}\label{lemmay}
Suppose $B \subseteq Q^n_{even}$ for $n$ odd, where $B$ is compressed but not an initial segment of weightlex. Then for any pair $x,y$ as in Lemma \ref{intandunion}, we have one of the following
\begin{enumerate}
\item $\displaystyle{\vert x \vert ,\vert y\vert = \dfrac{n+1}{2}}$
\item $\displaystyle{\vert x \vert = \frac{n-1}{2}, \vert y \vert = \frac{n+3}{2}}$
\item $\displaystyle{\vert x \vert, \vert y \vert = \frac{n-1}{2}}$
\item $\displaystyle{ \vert x \vert = \frac{n-3}{2} , \vert y\vert = \frac{n+1}{2}.}$
\end{enumerate}
\end{lemma}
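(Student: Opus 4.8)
The plan is to combine a parity observation with a single, genuinely combinatorial estimate. First I would use parity to discard possibility (3) of Lemma \ref{intandunion}: since $B\subseteq Q^n_{even}$, both $x$ and $y$ have even weight, so $|x|+|y|$ is even; but case (3) is $x=y^c$, forcing $|x|+|y|=n$, which is odd for $n$ odd. Hence only cases (1) and (2) survive, and they pin down the \emph{sum}: case (1) gives $|x|+|y|=n+1$ and case (2) gives $|x|+|y|=n-1$. Since $x\prec y$ forces $|x|\le|y|$, the four listed possibilities are exactly what remains once I also establish the crucial \emph{spread} bound $|y|-|x|\le 2$; everything else is the routine matching of $|y|-|x|\in\{0,2\}$ against the two possible sums.

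To prove the spread bound I would pass to extremal witnesses. Let $x_0$ be the $\preceq$-least vertex of $Q^n_{even}\setminus B$ and $y_0$ the $\preceq$-greatest vertex of $B$. Because $B$ is not an initial segment while $|B|=|C|$, we have $x_0\prec y_0$, every vertex $\prec x_0$ lies in $B$, and every vertex $\succ y_0$ lies outside $B$. Any inversion pair $(x,y)$ satisfies $x_0\preceq x\prec y\preceq y_0$, and since weightlex orders primarily by weight this gives $|x_0|\le|x|\le|y|\le|y_0|$; so it suffices to show $|y_0|-|x_0|\le 2$. Now apply Lemma \ref{intandunion} to the pair $(x_0,y)$ for each member $y\succ x_0$: by the parity step only cases (1) and (2) occur, and in each $y$ is obtained from $x_0^c$ by adding or deleting a single coordinate (namely $y=x_0^c\cup\{p\}$ with $p\in x_0$, or $y=x_0^c\setminus\{k\}$ with $k\notin x_0$). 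Thus every member above $x_0$ is a neighbor of $x_0^c$, and there are at most $n$ of these. Symmetrically, applying Lemma \ref{intandunion} to $(x,y_0)$ shows every non-member below $y_0$ is a neighbor of $y_0^c$, again at most $n$.

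From here the argument is a position count. Writing $P(\cdot)$ for the rank in the nest order, all $P(x_0)-1$ vertices below $x_0$ belong to $B$, so $|B|\le (P(x_0)-1)+n$; and all members lie $\preceq y_0$, so the non-members $\preceq y_0$ number $P(y_0)-|B|\le n$. Combining gives $P(y_0)-P(x_0)\le 2n-1$. Suppose for contradiction $|y_0|\ge |x_0|+4$. Then the even weight $v=|x_0|+2$ satisfies $|x_0|<v<|y_0|$, so every one of the $\binom{n}{v}$ vertices of weight $v$ lies strictly between $x_0$ and $y_0$ in the nest order, whence $P(y_0)-P(x_0)>\binom{n}{v}\ge\binom{n}{2}>2n-1$ for $n\ge 5$, a contradiction. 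Therefore $|y_0|-|x_0|\le 2$. The remaining case $n=3$ is immediate, since $Q^3_{even}$ carries only the weights $0$ and $2$.

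The main obstacle is exactly this spread bound: Lemma \ref{intandunion} by itself still permits wildly separated weights such as $(|x|,|y|)=(2,n-1)$, and it is the extremal-plus-counting argument that collapses the weights to the center. With $|y|-|x|\in\{0,2\}$ in hand, I would finish by cases on which alternative of Lemma \ref{intandunion} the pair $(x,y)$ satisfies: sum $n+1$ with spread $0$ or $2$ yields $\bigl(\tfrac{n+1}{2},\tfrac{n+1}{2}\bigr)$ or $\bigl(\tfrac{n-1}{2},\tfrac{n+3}{2}\bigr)$, while sum $n-1$ yields $\bigl(\tfrac{n-1}{2},\tfrac{n-1}{2}\bigr)$ or $\bigl(\tfrac{n-3}{2},\tfrac{n+1}{2}\bigr)$, which are precisely the four stated possibilities.
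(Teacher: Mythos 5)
Your proposal is correct and follows essentially the same route as the paper's own proof: eliminate case (3) of Lemma \ref{intandunion} by parity, take the extremal witnesses $x_{\min}$ and $y_{\max}$, use the lemma again to see that members above $x_{\min}$ (resp.\ non-members below $y_{\max}$) are Hamming-neighbors of the complement and hence number at most $n$ each, conclude the two witnesses lie in layers at most two apart, and match the possible sums $n\pm 1$ against the spread $|y|-|x|\in\{0,2\}$. The only difference is cosmetic: your write-up makes explicit the full-layer counting estimate $\binom{n}{v}>2n-1$ and the $n=3$ base case, which the paper leaves implicit in the sentence ``but then $x_{\min}$ and $y_{\max}$ exist in layers at most two away from each other.''
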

\begin{proof}
Since $n$ is odd, we can only be in cases 1 or 2 of Lemma \ref{intandunion}, since $\lvert x \rvert$ and $\lvert x^c \rvert$ have different parities. Let $y_{\max}=\max_{wlex}\{y: y \in B\}$ and $x_{\min}=\min_{wlex}\{x: x \notin B \}$.  Note that $\vert \{x \notin B:x \preceq y_{\max}\}\vert \leq \lvert y_{\max} \rvert + (n- \lvert y_{\max} \rvert)= n$. We also have that $\vert \{y \in B:y \succeq x_{\min}\}\vert \leq n.$ Therefore there are at most $2n-2$ elements between $x_{\min}$ and $y_{\max}$. But then $x_{\min}$ and $y_{\max}$ exist in layers at most two away from each other. Combining cases 1 and 2 from Lemma \ref{intandunion} with $x \in \binom{\left[n\right]}{m}$ and $y \in \binom{\left[n\right]}{m} \cup \binom{\left[n\right]}{m+2}$ gives the four options above.
\end{proof}
\begin{lemma}\label{lem:oddcompression}
If $B \subset Q^n_{even}$ for $n$ odd, where $B$ is compressed but not an initial segment of weightlex, then $\lvert N(B) \rvert \geq \lvert N(C) \rvert$, where $C = \init(\preceq, \vert B \vert )$.
\end{lemma}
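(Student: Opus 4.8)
The plan is to use Lemma \ref{lemmay} to confine the discrepancy between $B$ and $C$ to two consecutive even layers, and then to turn the comparison of $\abs{N(B)}$ and $\abs{N(C)}$ into a comparison of layer shadows.

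Write $m=\frac{n-1}{2}$. Since every element of $B$ has even weight, the four weight cases of Lemma \ref{lemmay} collapse according to the parity of $m$: the relevant even layers are $p=\frac{n-1}{2}$ and $q=\frac{n+3}{2}$ when $m$ is even, and $p=\frac{n-3}{2}$ and $q=\frac{n+1}{2}$ when $m$ is odd. In either case $q=p+2$, and Lemma \ref{lemmay} says that every pair $x\prec y$ with $x\notin B$ and $y\in B$ has $\abs{x},\abs{y}\in\{p,q\}$. First I would deduce from this that $B$ contains every even vertex of weight at most $p-2$ (a missing such vertex would lie below some $y\in B$ of weight $q$, producing a forbidden pair) and no vertex of weight above $q$. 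Writing $D=\binom{[n]}{\leq p-1}\cap Q^n_{even}$, $B_p=B\cap\binom{[n]}{p}$, and $B_q=B\cap\binom{[n]}{q}$, this gives $B=D\cup B_p\cup B_q$, and similarly $C=D\cup C_p\cup C_q$ with $C_p,C_q$ lexicographically initial and $C$ filling layer $p$ before layer $q$. Because $\abs{B}=\abs{C}$, $B$ and $C$ share $D$, and $C$ is as initial as possible, we have $\abs{B_p}\leq\abs{C_p}$ and $\abs{B_q}\geq\abs{C_q}$, not both equalities.

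Next I would reduce to shadows. Since $D$ contains the full layer $p-2$, its neighborhood already covers every odd layer of weight at most $p-1$, so $N(B)$ and $N(C)$ agree outside the two odd layers $p+1$ and $p+3$. In layer $p+1$, $N(B)$ is the union of the upper shadow $\partial^{\uparrow}(B_p)$ and the lower shadow $\partial^{\downarrow}(B_q)$, while in layer $p+3$ it is the upper shadow $\partial^{\uparrow}(B_q)$, and the same description holds for $C$. Thus it suffices to prove
\[
\abs{\partial^{\uparrow}(B_p)\cup\partial^{\downarrow}(B_q)}+\abs{\partial^{\uparrow}(B_q)}\ \geq\ \abs{\partial^{\uparrow}(C_p)\cup\partial^{\downarrow}(C_q)}+\abs{\partial^{\uparrow}(C_q)}.
\]

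The heart of the argument, and the step I expect to be the main obstacle, is this shadow inequality. Morally it records a trade-off: pushing mass from layer $p$ up to layer $q$ can only shrink the contribution in layer $p+1$, but it creates the new contribution $\partial^{\uparrow}(B_q)$ in layer $p+3$ that $C$ does not have, and one must show the latter gain always dominates the former loss. I would resolve this using the tight structure forced by Lemma \ref{intandunion} and Lemma \ref{lemmay}: the vertices of $B_q$ that are ``too high'' and the vertices of $C_p$ they replace are nearly complementary (their union is $[n]$ or $[n]$ minus a point), which pins down $B_p$ and $B_q$ closely enough either to reduce, as in Lemma \ref{lem:evencompress}, to a short explicit list of configurations whose shadow sizes I can compare directly via binomial coefficients, or to exhibit a further coordinate shift that moves $B$ toward $C$ without increasing $\abs{N(B)}$. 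The delicate feature that separates this from the even case is that same-weight violations (cases 1 and 3 of Lemma \ref{lemmay}) are possible, so $B_p$ and $B_q$ need not themselves be lexicographic initial segments; controlling their shadows therefore requires Kruskal--Katona--type monotonicity alongside the complementarity constraints, rather than the one-line verification available when $n$ is even.
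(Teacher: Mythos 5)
Your reduction is sound as far as it goes. Lemma \ref{lemmay} plus parity does confine every violating pair to two even layers $p$ and $q=p+2$; since $B$ is not an initial segment, some violating pair $(x_0,y_0)$ exists, and comparing any vertex missing below layer $p$ (or present above layer $q$) against $y_0$ (resp.\ $x_0$) would contradict Lemma \ref{lemmay}, which gives your decomposition $B=D\cup B_p\cup B_q$; and since $N(D)$ is exactly the set of odd vertices of weight at most $p-1$, the comparison of $\lvert N(B)\rvert$ with $\lvert N(C)\rvert$ does reduce to your displayed two-layer shadow inequality. Two small slips: your parenthetical justification that $D\subseteq B$ presumes $B$ contains a vertex of weight $q$, which need not exist (cite an arbitrary violating pair instead), and ``not both equalities'' is false --- in case 3 of Lemma \ref{lemmay} one can have $B_q=C_q=\emptyset$ and $\lvert B_p\rvert=\lvert C_p\rvert$ with $B_p$ not lex-initial, so the discrepancy can sit entirely inside one layer with all sizes agreeing.

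The genuine gap is that the displayed inequality --- which you yourself flag as ``the main obstacle'' --- is never proved, and it is not a technicality: it is the entire content of the lemma. You offer two unexecuted alternatives (``reduce to a short explicit list of configurations'' or ``exhibit a further coordinate shift''), and the appeal to Kruskal--Katona-type monotonicity cannot close the gap on its own: at best it addresses the single-layer situation $B_q=C_q$, whereas the real difficulty is the cross-layer trade-off when $\lvert B_q\rvert>\lvert C_q\rvert$, namely showing that the extra term $\partial^{\uparrow}(B_q)$ in layer $p+3$ always compensates the loss in layer $p+1$; for sets that are merely of the right sizes this assertion is as hard as Theorem \ref{ijcompress} itself, so everything hinges on actually deploying the compressedness of $B$, which you only gesture at. The paper does exactly this missing work by a local exchange induction: ordering the discrepancy by the weightlex distance between $x_{\min}$ and $y_{\max}$, it shows in each case of Lemma \ref{lemmay} that compressedness (via Lemma \ref{intandunion}) forces $N(x_{\min})\subseteq N(B\setminus\{y_{\max}\})$, so replacing $y_{\max}$ by $x_{\min}$ --- or, in case 3, swapping the family $\mathcal{Y}$ of top elements for the lowest missing vertices $\mathcal{X}$ --- never increases $\lvert N(B)\rvert$, and iterating these swaps reaches $C$. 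Your ``coordinate shift'' alternative is in this spirit, but it is the step that must be carried out, not a remark; until one of your two routes is executed, the lemma is asserted rather than proved.
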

\begin{proof}
We induct on the distance in weightlex-even between $x_{\min}$ and $y_{\max}$, as defined in the proof of Lemma \ref{lemmay}. We discuss cases according to the situations in Lemma \ref{lemmay}. Note that since for $y \prec x_{\min}, y \in B$, it is sufficient to only look at upper neighbors of $x_{\min}$.  
\begin{enumerate}[leftmargin=1 in]
\item[Case 1:] Enumerate elements of $y_{\max}$ and $x_{\min}$ as $y_{\max}=\{i_1,i_2,\ldots, i_{\frac{n+1}{2}}\}$ and $x_{\min}=\{j_1,j_2,\ldots, j_{\frac{n+1}{2}}\}$, with elements written in increasing order. Consider a set $x'= x_{\min} \cup \{i_l\}$ for $\{i_l\} \notin x_{\min} \cap y_{\max}$. Note that $x'$ is a neighbor of $y'= x_{\min} \setminus \{j_{\frac{n+1}{2}}\} \cup \{i_l\}$. Since $\lvert y_{max} \cap y' \rvert = 2$ and $y' \preceq y_{max}$, we must have that $y' \in B$. The choice of $i_l$ was arbitrary, so $N(x_{\min}) \subseteq N(B \setminus \{y_{\max}\})$. Therefore, $\lvert N(B \setminus \{y_{\max}\} \cup \{x_{min}\}) \rvert \leq \lvert N(B) \rvert$.
\item[Case 2:] This proof is similar to the previous case.
\item[Case 3:]  As before, say $y_{\max}=\{i_1,i_2,\ldots, i_{\frac{n-1}{2}}\}$ and $x_{\min}=\{j_1,j_2,\ldots, j_{\frac{n-1}{2}}\}$ with elements written in increasing order. Consider $x'= x_{\min} \cup \{i_\ell\}$ for some $1\leq \ell \leq \frac{n-1}{2}$. We have $x' \in N(y')$ for $y'= x_{\min} \setminus \{j_{\frac{n+1}{2}}\} \cup \{i_\ell\}$. Since $\lvert y_{\max} \cap y' \rvert = 2$ and $y' \preceq y_{\max}$, we must have that $y' \in B$.

Now consider $x_{\min} \cup \{k\}$ for $k$ as given in Lemma \ref{intandunion}. Suppose $j_l < k < j_{l+1}$. Then $x_{\min} \cup \{k\} \in N(y')$ for $y'=x_{\min} \cup \{k\}$. Since $y' \prec x_{min}$, we have $y' \in B$. Then $N(x_{\min}) \subseteq N(B \setminus \{y_{\max}\})$.

Else, $k> j_{\ell}$ for all $\ell$. If $k=n$, then $y \cup \{k\} \notin N(B \setminus \{y_{\max}\})$, so in the neighborhood count, $x_{\min} \cup \{k\}$ and $y_{\max} \cup \{k\}$ cancel out, so $N(x_{\min}) \subseteq N(B \setminus \{y_{\max}\})$.

Lastly, we consider if $k> j_{l}$ but $k\neq n$. If there exists a $y' \in B$ with $y'=x_{\min} \setminus \{j_l\}\cup \{k\} $, the result holds. Otherwise, let $$\mathcal{X}=\{x:x \notin B, x \preceq y_{\max}\}=\left\{x: x= x_{\min} \setminus \{j_l\} \cup \{k \}, 1 \leq l \leq \frac{n-1}{2} \right\}\cup x_{\min}.$$ Note that $N(\mathcal{X}) \setminus \{x_{\min} \cup \{k\}\} \subseteq N(B)$ and $\lvert \mathcal{X} \rvert=\frac{n+1}{2}$. 
\\
Now consider the set
\begin{align*}
\mathcal{Y}&=\{y: y \in B, y_{\max} \cup \{k\} \in N(y)\}
\\&=\left\{y: y \in B, y=y_{\max} \setminus \{i_l\} \cup \{k\}, 0 \leq l \leq \frac{n-1}{2}\right\}\cup y_{\max}
\end{align*}
Since $\lvert \mathcal{Y} \rvert \leq \lvert \mathcal{X} \rvert$, let us replace the lowest $\lvert \mathcal{Y} \rvert$ $x's$ in weightlex-even order in $B$ with $\mathcal{Y}$ to acquire $B'$. Since these two sets both have a vertex that is only in \emph{their} neighborhood, $\lvert N(B) \rvert \geq \lvert N(B') \rvert$.
\item[Case 4:] This proof is similar to the previous case. 
\end{enumerate}
\end{proof}

\begin{proof}[Proof of Theorem~\ref{ijcompress}]
The theorem follows from Lemmas \ref{lem:evencompress} and \ref{lem:oddcompression}.
\end{proof}
\begin{lemma} \label{munqn}
For $Q^n$, $\munevensp(k)= \munoddsp(k)$.
\end{lemma}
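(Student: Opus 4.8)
The plan is to produce a graph automorphism of $Q^n$ that interchanges the even part $E$ and the odd part $O$, and then simply transport minimizing vertex sets across it. Concretely, I would define the map $\sigma : V(Q^n) \to V(Q^n)$ by $\sigma(x) = x \triangle \{1\}$, i.e. toggling membership of the element $1$. This is the natural candidate because adding or deleting a single fixed coordinate changes the weight by exactly one, which is precisely what is needed to swap the two sides of the bipartition.

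First I would verify that $\sigma$ is a graph automorphism. Since for all $x,y$ we have $\sigma(x) \triangle \sigma(y) = (x \triangle \{1\}) \triangle (y \triangle \{1\}) = x \triangle y$, the distance is preserved: $|\sigma(x) \triangle \sigma(y)| = |x \triangle y|$, so $xy \in E(Q^n)$ if and only if $\sigma(x)\sigma(y) \in E(Q^n)$. Moreover $\sigma$ is a bijection, indeed an involution, since $\sigma^2 = \mathrm{id}$. Next I would note that $\sigma$ flips parity, because $\wt(\sigma(x)) = \wt(x) \pm 1$; hence $\sigma$ restricts to a bijection $E \to O$ (and, being an involution, also $O \to E$).

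The equality then follows by transporting neighborhoods. For any $V \subseteq E$ with $|V| = k$, the image $\sigma(V) \subseteq O$ has size $k$, and since $\sigma$ is an automorphism we get $N(\sigma(V)) = \sigma(N(V))$, whence $|N(\sigma(V))| = |N(V)|$. Minimizing over all such $V$ gives $\munoddsp(k) \leq \munevensp(k)$. Running the identical argument with the roles of $E$ and $O$ exchanged, using that $\sigma$ also carries $O$ to $E$, yields $\munevensp(k) \leq \munoddsp(k)$, and therefore equality.

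The entire content lives in choosing the right symmetry; once the parity-reversing automorphism is in hand the conclusion is immediate, and in particular no appeal to the isoperimetric nesting machinery of Theorem~\ref{ijcompress} is required. The only step demanding genuine (if brief) care is confirming that $\sigma$ preserves adjacency rather than merely permuting vertices, which the symmetric-difference identity $\sigma(x) \triangle \sigma(y) = x \triangle y$ settles cleanly; I expect that to be the sole place where a careless argument could go wrong.
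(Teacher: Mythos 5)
Your proof is correct, and it isolates exactly the right object: the paper's own proof of Lemma~\ref{munqn} uses the very same map (flipping the first bit), so the core symmetry is identical. Where you differ is in what you do with it. You treat $\sigma(x) = x \triangle \{1\}$ purely as a parity-reversing automorphism and transport arbitrary minimizing sets across it, which proves $\munevensp(k) = \munoddsp(k)$ immediately and with no reliance on any other result in the paper --- in particular, as you note, no appeal to Theorem~\ref{ijcompress}. The paper instead does more work with the same map: it verifies case by case that the bit-flip preserves weightlex order \emph{within} each parity class, concludes that it carries initial segments of weightlex-even to initial segments of weightlex-odd, and deduces the equality of the two mun functions from the optimality of initial segments. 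For the lemma as literally stated, your route is strictly leaner and arguably the ``right'' proof. What the paper's heavier argument buys is structural information needed elsewhere: knowing that the flip respects the weightlex orders on both sides is what ties the even and odd nest orders together so that Theorem~\ref{thm:qn} can assert isoperimetric nesting with weightlex on \emph{both} parts simultaneously. So your proof fully establishes Lemma~\ref{munqn}, while the paper's proof is really proving a slightly stronger, order-theoretic statement under the same lemma heading; if your version were substituted into the paper, the order-preservation fact would need to be recorded separately for the proof of Theorem~\ref{thm:qn} to go through as written.
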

\begin{proof}
Let $\varphi:Q^n\rightarrow Q^n$ be the graph homomorphism that flips the first bit of a string. To show that this map preserves weightlex order on vertices with the same parity, suppose $x \leq_{wlex} y$ with $\lvert x \rvert \equiv \lvert y \rvert \pmod 2$. If $x$ and $y$ both have the same first bit, then flipping it preserves weightlex order. If $\abs{\wt(x) - \wt(y)} \geq 4$, then $\varphi$ preserves weightlex order. If $x$ has 0 in its first bit and $y$ does not and $\wt(x)=\wt(y)$, then $\wt(\varphi(x)) < \wt(\varphi(y))$, so $x \preceq y$.  If $x$ has 0 in its first bit and $y$ does not and $\wt(y)- \wt(x)=2$, then $\varphi(x)$ has 1 and $\varphi(y)$ does not, so $x \preceq y$. If $x$ has 1 in its first bit and $y$ does not, then $\wt(\varphi(x))<\wt(\varphi(y))$.

Since $\varphi$ is order preserving on same parity vertices and a graph homomorphism, it sends initial segments of weightlex-even to initial segments of weightlex-odd and vice versa, so $\munevensp(k)= \munoddsp(k)$.
\end{proof}
Combining these results, we have:
\begin{theorem}\label{thm:qn}
$Q^n$ has isoperimetric nesting with weightlex$_{even}$ and weightlex$_{odd}$ as the nesting orders, and therefore $\Hun(Q^n)=u^E(Q^n)+1$
\end{theorem}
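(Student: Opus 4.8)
The plan is to assemble the theorem from the lemmas already in hand, since the genuinely hard combinatorics — the compression argument culminating in Theorem \ref{ijcompress} — is already complete. First I would verify the two defining conditions of isoperimetric nesting (Definition \ref{def:isonest}) with $\leq_E$ and $\leq_O$ taken to be weightlex-even and weightlex-odd, respectively.

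For condition (1), fix $k$ and let $C = \init(\preceq, k)$ be the initial weightlex-even segment of size $k$. Lemma \ref{lem:nlexislex} tells us that $N(C)$ is itself an initial segment of weightlex-odd, while Theorem \ref{ijcompress} guarantees that $C$ minimizes neighborhood size among even sets of size $k$, i.e.\ $\lvert N(C)\rvert = \munevensp(k)$. Since an initial segment is determined by its size, these two facts combine to give $N(C) = \init(\preceq, \munevensp(k))$, which is exactly condition (1). The whole content here is the interplay of ``the neighborhood of an initial segment is an initial segment'' with ``initial segments minimize neighborhoods''; neither alone suffices, but together they pin down the identity on the nose.

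For condition (2) I would invoke symmetry rather than re-run the compression machinery. The bit-flip map $\varphi$ from Lemma \ref{munqn} is a graph automorphism that carries weightlex-even initial segments to weightlex-odd initial segments and exchanges the parts $E$ and $O$. Transporting the even-side statements (Lemma \ref{lem:nlexislex} and Theorem \ref{ijcompress}) through $\varphi$ shows that $N(\init(\preceq, k))$ for the weightlex-odd order is an initial weightlex-even segment of size $\munoddsp(k)$, which is condition (2). This completes the proof that $Q^n$ has isoperimetric nesting, so the first clause of the theorem holds.

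Finally, to extract the hunter number I would apply Theorem \ref{thm:main}. Its one remaining hypothesis, $\lvert u^E(Q^n) - u^O(Q^n)\rvert \leq 1$, follows immediately — indeed with equality to $0$ — from Lemma \ref{munqn}, which gives $\munevensp(k) = \munoddsp(k)$ for every $k$ and hence $u^E(Q^n) = u^O(Q^n)$. Theorem \ref{thm:main} then yields $\Hun(Q^n) = \min\{u^E(Q^n), u^O(Q^n)\} + 1 = u^E(Q^n) + 1$, as claimed. The main obstacle is thus not any new estimate but bookkeeping: correctly combining Lemmas \ref{lem:nlexislex} and Theorem \ref{ijcompress} to get the exact nesting identities, and transferring the even-side results to the odd side via $\varphi$ so that Theorem \ref{thm:main} applies verbatim.
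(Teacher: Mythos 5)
Your proposal is correct and takes essentially the same route as the paper: isoperimetric nesting is assembled from Theorem \ref{ijcompress} together with Lemma \ref{lem:nlexislex}, the odd side and the hypothesis $\vert u^E(Q^n)-u^O(Q^n)\vert \leq 1$ are handled via the bit-flip symmetry of Lemma \ref{munqn}, and Theorem \ref{thm:main} is then applied. Your writeup is in fact more explicit than the paper's own two-line proof, which cites numbered conditions of Definition \ref{def:isonest} left over from an earlier version of that definition; your careful combination of ``neighborhoods of initial segments are initial segments'' with ``initial segments minimize neighborhoods'' is exactly what that terse proof leaves implicit.
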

\begin{proof}
By Theorem \ref{ijcompress}, the neighborhoods of initial segments achieve $\mun(k)$ and by Lemma \ref{lem:nlexislex}, the neighborhoods of initial segments are initial segments of the opposite parity. Thus $Q^n$ has isoperimetric nesting. By Lemma \ref{munqn}, we have that $u^O(Q^n)=u^E(Q^n)$, so we are able to apply Theorem \ref{thm:main} to get $\Hun(Q^n)$.
\end{proof}
\begin{example}
We give the hunter strategy prescribed by the theorem for $Q^4$. 
\begin{enumerate}
\item[$H_1:$] $\{1001, 0110, 0101, 0011, 1111\}$
\item[$H_2:$] $\{0010, 0001, 1110, 1101, 1011\}$
\item[$H_3:$] $\{1100, 1010, 1001, 0110, 0101\}$
\item[$H_4:$] $\{1000, 0100, 0010, 0001, 1110\}$.
\end{enumerate}

\end{example}

In order to calculate $u^E(Q^n)$, we look at the differences between successive values of $\muneven(k)$.
\begin{definition}[Difference sequence]
The \emph{difference sequence} of a graph $G$ is the sequence $(a_k)_{k=1}^{\lvert V(G) \rvert}$ where $a_k=\mun(k)-\mun(k-1)$.
\end{definition}
\begin{example}
The difference sequence for $Q^4$ is given by
(4,2,1,0,1,0,0,0,0).
\end{example}
Here, $u(Q^4)=5$. This is the maximum value of $\munsupersp(k)-k$, not $\munsupersp(k)$, but we can use the difference sequence to find either. Our first objective will be to give a description of the difference sequence of $Q^n$ for any $n$ by finding the difference subsequence for each layer. We will then use that sequence to find where the maximum values of $\mun(k)-k$ occur in order to find $u(G)$.

%We are interested in finding the maximum value of $\mun(k)-k$, not $\mun(k)$. But note that the sequence $(b_k)_{k=1}^{\vert V(G) \vert}$ defined by $b_k= (\mun(k)-k)-(\mun(k-1)-(k-1))$ is equal to the sequence defined by $b_k = a_k - 1$. So if we find a description for the difference sequence, we can easily find the maximum of the $(b_k)$ sequence.

\begin{definition}
We define a sequence denoted $n\up{i}$ recursively by setting $n\up{0} = n$ and $n\up{i}= n\up{i-1}\cdot(n-1)\up{i}$ where $\cdot$ is the concatenation of sequences. We denote the length of $n\up{i}$ by $\lvert n\up{i} \rvert$ and the summation of all of the entries as $\arrowsum (n\up{i})$.
\end{definition}
\begin{example}\leavevmode
\begin{enumerate}
\item $4\up{1}=(4,3,2,1,0)$, with $\vert 4\up{1}\vert = 5$ and $\arrowsum(4\up{i})=10$.
\item $3\up{2}=(3,2,1,0,2,1,0,1,0,0)$, with $\vert 3\up{2}\vert = 10$ and $\arrowsum(3\up{2})=10$.
\item $2\up{3}=(2,1,0,1,0,0,1,0,0,0)$, with $\vert 2\up{3}\vert = 10$ and $\arrowsum(2\up{3})= 5$.
\end{enumerate}
\end{example}
\begin{lemma}
For $n,i$, we have $\lvert n\up{i} \rvert = \binom{n+i}{i}$ and $\arrowsum (n\up{i})=\binom{n+i}{i+1}$.
\end{lemma}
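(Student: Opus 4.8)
The plan is to prove both identities simultaneously by a single induction on $n+i$, exploiting the fact that both length and $\arrowsum$ are additive under the concatenation operation $\cdot$. Concretely, from the recursion $n\up{i} = n\up{i-1}\cdot (n-1)\up{i}$ one immediately reads off
\[
\lvert n\up{i}\rvert = \lvert n\up{i-1}\rvert + \lvert (n-1)\up{i}\rvert
\quad\text{and}\quad
\arrowsum(n\up{i}) = \arrowsum(n\up{i-1}) + \arrowsum((n-1)\up{i}),
\]
and each summand on the right has strictly smaller $n+i$ than the left-hand side, so the induction is well-founded.

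First I would settle the base cases. When $i=0$ we have $n\up{0}=(n)$, a single-term sequence, so $\lvert n\up{0}\rvert = 1 = \binom{n}{0}$ and $\arrowsum(n\up{0}) = n = \binom{n}{1}$, exactly the claimed values. I also need the boundary of the recursion in $n$: unwinding the definition with the convention that $(-1)\up{i}$ is the empty sequence forces $0\up{i}=(0)$ for every $i$, whence $\lvert 0\up{i}\rvert = 1 = \binom{i}{i}$ and $\arrowsum(0\up{i}) = 0 = \binom{i}{i+1}$. Pinning down this stopping convention cleanly is the only genuinely delicate point, since the recursion in $n$ needs a well-defined terminating rule; everything downstream is mechanical.

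For the inductive step, assume $n\geq 1$ and $i\geq 1$ and that both formulas hold for all pairs with smaller $n+i$. For the length, substituting the inductive hypothesis into the additivity relation gives
\[
\lvert n\up{i}\rvert = \binom{n+i-1}{i-1} + \binom{(n-1)+i}{i},
\]
and I would finish by recognizing this as Pascal's rule $\binom{n+i}{i} = \binom{n+i-1}{i-1} + \binom{n+i-1}{i}$ after noting the index shift $\binom{(n-1)+i}{i} = \binom{n+i-1}{i}$. The sum identity is identical in spirit: the hypothesis yields $\arrowsum(n\up{i}) = \binom{n+i-1}{i} + \binom{(n-1)+i}{i+1}$, and Pascal's rule $\binom{n+i}{i+1} = \binom{n+i-1}{i} + \binom{n+i-1}{i+1}$ closes the argument.

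In summary, the proof is a routine double application of Pascal's identity bootstrapped off the additivity of length and of $\arrowsum$ under concatenation. I do not anticipate any real obstacle beyond fixing the empty-sequence base convention for the $n$-recursion and keeping the index shifts such as $\binom{(n-1)+i}{i} = \binom{n+i-1}{i}$ straight throughout.
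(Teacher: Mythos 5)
Your proof is correct and follows essentially the same route as the paper: additivity of length and $\arrowsum$ under concatenation applied to the recursion $n\up{i}=n\up{i-1}\cdot(n-1)\up{i}$, followed by induction and Pascal's identity. The paper's version is simply terser—it leaves the induction, the base cases (including the $0\up{i}$ convention you rightly pinned down), and the $\arrowsum$ computation implicit, whereas you spell them out.
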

\begin{proof}
Using the recursive definition, we have that 
\begin{align*}
\lvert n\up{i} \rvert &= \lvert n\up{i-1} \rvert + \lvert (n-1)\up{i}\rvert 
\\ &= \binom{n+i-1}{i-1} + \binom{n+i-1}{i}
\\ &= \binom{n+i}{i}
\end{align*}
The computation for $\arrowsum (n\up{i})$ is similar.
\end{proof}
Note that $\vert n\up i \vert - \arrowsum (n\up i)$ is non-negative when $n \geq i+1$ and negative otherwise. If this difference is positive, we call this sequence a net gainer, and similarly define net loser and net neutral.
By Theorem \ref{thm:qn} we know that initial segments of weightlex-even achieve $\munevensp$, so there is a subsequence of our difference sequence corresponding to each even layer. 
\begin{example}
The difference subsequence of $Q^7$ corresponding to $\binom{[7]}{2}$ is $$(5,4,3,2,1,0,4,3,2,1,0,3,2,1,0,2,1,0,1,0,0).$$
\end{example}

\begin{prop}\label{prop:difference}
For $Q^n$, the difference subsequence corresponding to the $i$-th layer is given by $(n-i)\up{i}$ for $i \neq 1$. For $i=1$, the difference subsequence is $n\cdot(n-2)\up{1}$.
\label{prop:subsequence}
\end{prop}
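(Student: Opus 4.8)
The plan is to work inside a single part of the bipartition and track, layer by layer, exactly which neighbors are newly created as the initial weightlex segment grows. By Lemma~\ref{munqn} the even and odd sides behave identically, so it suffices to analyze even $i$ using $\munevensp$ on $Q^n_{even}$, with the verbatim argument on $Q^n_{odd}$ covering odd $i\geq 3$; I will treat $i=0$ and $i=1$ as boundary cases. The first fact I would establish is that when we begin adding the weight-$i$ vertices, the initial segment already contains all of layer $i-2$, and since every weight-$(i-1)$ set contains a weight-$(i-2)$ subset, all of layer $i-1$ already lies in the neighborhood. Consequently, for $i\geq 2$, adding a weight-$i$ vertex $v$ can only contribute new neighbors of weight $i+1$: the weight-$(i-1)$ neighbors are already present, and the weight-$(i+2)$ supersets belong to the not-yet-added layer $i+2$.

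The second step is to count these new upper neighbors precisely. A set $S=v\cup\{b\}$ with $b\notin v$ is a neighbor of some previously added even vertex exactly when one of its weight-$i$ subsets precedes $v$ in weightlex order; hence $S$ is new if and only if $v$ is the weightlex-smallest weight-$i$ subset of $S$. The weight-$i$ subsets of $S$ are $v$ together with the sets $(v\setminus\{a\})\cup\{b\}$ for $a\in v$, and comparing $v$ with $(v\setminus\{a\})\cup\{b\}$ reduces to locating $\min\{a,b\}$: since $a\in v$ and $b\notin v$, we have $v\prec(v\setminus\{a\})\cup\{b\}$ precisely when $a<b$. Requiring this for every $a\in v$ forces $b>\max v$, so the number of new neighbors contributed by $v$ is exactly $n-\max v$.

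It then remains to show that enumerating $n-\max v$ as $v$ runs over $\binom{[n]}{i}$ in lexicographic order reproduces $(n-i)\up{i}$. This I would prove by induction matching the recursion $(n-i)\up{i}=(n-i)\up{i-1}\cdot(n-i-1)\up{i}$ to the lexicographic splitting of $\binom{[n]}{i}$ into the sets containing $1$ followed by those avoiding $1$. For sets containing $1$, writing $v=\{1\}\cup T$ with $T\in\binom{\{2,\ldots,n\}}{i-1}$ gives $\max v=\max T$, and relabeling $\{2,\ldots,n\}$ as $[n-1]$ turns the list into $(n-1)-\max T'$ over $T'\in\binom{[n-1]}{i-1}$, which by induction equals $(n-i)\up{i-1}$. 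For sets avoiding $1$, the same relabeling identifies them with $\binom{[n-1]}{i}$ and yields $(n-1-i)\up{i}$. Concatenating gives $(n-i)\up{i}$, with the base cases $n=i$ (the single set $[n]$ contributing $0$) and $n<i$ (empty list) anchoring the induction.

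Finally I would address the exceptional layer $i=1$, which is also where I expect the main subtlety to live. Here layer $1$ is the first layer on the odd side, so there is no layer $-1$ to pre-cover the lower neighbor $\emptyset$; the upper-neighbor count $n-\max\{j\}=n-j$ still holds for each singleton $\{j\}$, but the very first singleton $\{1\}$ additionally creates the new neighbor $\emptyset$, contributing one extra. This shifts the sequence from the naive $(n-1)\up{1}$ to $n\cdot(n-2)\up{1}$, which is exactly the stated exception. The heart of the argument, and the place most in need of care, is the bookkeeping of the first two steps: verifying that lower neighbors are always already present (so that only upper neighbors are ever counted) and pinning down the ``smallest subset'' criterion. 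Once the contribution of $v$ is identified as $n-\max v$, the remaining identity is a clean structural induction.
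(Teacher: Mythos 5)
Your proof is correct, but it follows a genuinely different route from the paper's. The paper's engine is Lemma~\ref{lem:subsequence}, a recursion $\diff(n,i)=\diff(n-1,i-1)\cdot\diff(n-1,i)$ obtained by splitting layer $i$ into the sets containing $1$ and those avoiding $1$ and arguing that each block of the difference sequence is literally a layer difference sequence of $Q^{n-1}$; Proposition~\ref{prop:difference} then follows by induction on $n$, with $i=0,1$ handled separately. You instead prove an explicit entry-wise formula: for $i\geq 2$ every lower neighbor of a weight-$i$ vertex is already covered (since layer $i-2$ is complete), and $v\cup\{b\}$ is a genuinely new neighbor of $v$ if and only if $b>\max v$, so $v$ contributes exactly $n-\max v$; only then do you run an induction --- using the same contains-$1$/avoids-$1$ splitting, but now applied to the purely numerical list $(n-\max v)_v$ --- to identify that list with $(n-i)\up{i}$. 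The two arguments share the splitting idea and the same fix at $i=1$ (the first singleton picks up the extra neighbor $\emptyset$), but your key lemma has no counterpart in the paper: it isolates all of the graph theory in one transparent counting claim and reduces the rest to combinatorics of the sequences $n\up{i}$. A concrete payoff is that your argument covers every layer $i\geq 2$ uniformly, whereas the paper invokes its recursion only for $i\geq 3$ (because $\diff(n-1,1)$ is the exceptional sequence) and in effect leaves $i=2$ to the informal remark about ``ignoring the lower neighbor''; your proof closes that case with no extra work. What the paper's route buys in exchange is brevity: once the recursion is believed, the identification with $(n-i)\up{i}$ is immediate from the recursive definition of $n\up{i}$, with no need for the closed form $n-\max v$.
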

First, we prove a lemma.
\begin{lemma}
For $Q^n$, if we denote the difference subsequence for the $i$-th layer as $\diff(n,i)$, we have that $\diff(n,i)=\diff(n-1,i-1)\cdot\diff(n-1,i)$ for $i\geq 2$.
\label{lem:subsequence}

\end{lemma}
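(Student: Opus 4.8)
The plan is to exploit the recursive product structure of the cube, writing $Q^n = K_2 \times Q^{n-1}$ where the $K_2$ factor records membership of the smallest element $1 \in [n]$. The key preliminary observation is that for $i \geq 2$ the entries of $\diff(n,i)$ count only \emph{new upper neighbors}. Indeed, by the time we begin adding the weight-$i$ vertices in weightlex order, the vertices already placed include all of layer $i-2$; by Lemma~\ref{lem:nlexislex} the neighborhood of a complete layer is the entire adjacent layer, so layer $i-1$ already lies in the current neighborhood. Hence the only neighbors a newly added weight-$i$ vertex $x$ can contribute are its weight-$(i+1)$ supersets $x\cup\{b\}$, and $\diff(n,i)$ records, for each $x$ in order, how many of these supersets have not already appeared as a neighbor of an earlier weight-$i$ vertex.

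Next I would split layer $i$ of $Q^n$ according to whether a vertex contains $1$. Because weightlex restricted to a single layer is lexicographic and $1$ is the smallest element, every weight-$i$ set containing $1$ precedes every weight-$i$ set avoiding $1$; thus the layer-$i$ block of the difference sequence factors as the ``contains $1$'' sub-block followed by the ``avoids $1$'' sub-block. Deleting $1$ identifies the first sub-block, order-isomorphically, with layer $i-1$ of the sub-hypercube $Q^{n-1}$ on ground set $\{2,\dots,n\}$, while the identity map identifies the second sub-block with layer $i$ of that same $Q^{n-1}$. This is exactly the shape $\diff(n-1,i-1)\cdot\diff(n-1,i)$ demanded by the statement, and Lemma~\ref{munqn} guarantees that the opposite parity of $Q^{n-1}$ (layer $i-1$ has parity opposite to layer $i$) carries the same difference data, so the cross-parity bookkeeping is harmless.

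It then remains to verify that both order-isomorphisms preserve the new-upper-neighbor counts. For $x=\{1\}\cup S$ in the first sub-block, its supersets are the sets $\{1\}\cup S\cup\{b\}$ with $b\in\{2,\dots,n\}\setminus S$; the weight-$i$ sets that could already have hit such a superset are precisely the \emph{other} sets containing $1$, which under deletion of $1$ are the weight-$(i-1)$ subsets of $S\cup\{b\}$ inside $Q^{n-1}$, so newness transfers verbatim. For $T$ in the second sub-block, the superset $T\cup\{1\}$ is never new, since it is already a neighbor of the earlier ``contains $1$'' vertex $(T\setminus\{a\})\cup\{1\}$; hence only the supersets $T\cup\{b\}$ with $b\neq 1$ contribute, and these match the layer-$i$ count inside $Q^{n-1}$ exactly.

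The main obstacle, and the place that demands genuine care, is the interaction with the bottom of the cube. The reduction to ``upper neighbors only'' is precisely what requires $i\geq 2$: when $i=1$ the vertex $\emptyset$ is a genuine new lower neighbor, so the first sub-block inherits an extra contribution from layer $0$ — this is exactly the anomaly isolated by the separate formula $n\cdot(n-2)\up{1}$ for $i=1$ in Proposition~\ref{prop:subsequence}. I would therefore treat the base layers explicitly, confirming that once $i\geq 2$ the layer below is always saturated before layer $i$ is reached, and track the boundary term where the first sub-block meets layer $1$ of $Q^{n-1}$; everything else is a routine transport of the two order-isomorphisms.
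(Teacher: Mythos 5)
Your decomposition is the same one the paper uses: within a layer, weightlex is lexicographic, so the sets containing $1$ precede those avoiding $1$, and deleting $1$ from the first block (resp.\ leaving the second block alone) identifies them with layers $i-1$ and $i$ of $Q^{n-1}$. Your verification that the new-neighbor counts transfer is in fact more careful than the paper's proof, which only checks the two ``connecting'' entries, and for $i\geq 3$ your argument is complete. (One aside: the appeal to Lemma~\ref{munqn} is unnecessary --- $\diff(n-1,i-1)$ is by definition the subsequence attached to layer $i-1$ of $Q^{n-1}$, in whichever part that layer happens to lie, so no cross-parity comparison is needed.)

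The step you postpone, however, is not ``routine transport'': it is exactly where the identity fails, and tracking the boundary term shows that it does not cancel. Take $i=2$. In $Q^n$, the first weight-$2$ vertex $\{1,2\}$ contributes $n-2$ new neighbors, since its lower neighbors $\{1\}$ and $\{2\}$ already lie in $N(\emptyset)$; so $\diff(n,2)$ begins with $n-2$. But in $Q^{n-1}$, the first weight-$1$ vertex contributes $n-1$ new neighbors, because there $\emptyset$ is itself a genuinely new (lower) neighbor; so $\diff(n-1,1)\cdot\diff(n-1,2)$ begins with $n-1$. Concretely, $\diff(6,2)=4\up{2}=(4,3,2,1,0,3,\ldots)$ while $\diff(5,1)\cdot\diff(5,2)=(5,3,2,1,0,3,\ldots)$: the two sequences agree except in the first entry. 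So the recursion as literally stated is false at $i=2$; it holds only for $i\geq 3$, or for $i=2$ with $\diff(n-1,1)$ read as the upper-neighbors-only sequence $(n-2)\up{1}$ --- precisely the ``ignore the lower neighbor'' convention the paper adopts in the discussion following the lemma. You are in good company: the paper's own proof asserts that the first entries of $\diff(n,i)$ and $\diff(n-1,i-1)$ are both $n-i$, which is equally wrong at $i=2$, and the proof of Proposition~\ref{prop:difference} is careful to invoke the recursion only for $i\geq 3$, treating small $i$ separately. The honest fix for your write-up is the same: prove the recursion for $i\geq 3$, and handle $i=2$ (like $i=1$) via the modified first entry.
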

\begin{proof}
Since we are looking at weightlex ordering, all of the sets containing 1 precede all the sets not including 0. If we ignore element 1 in the sets containing it, we are looking at subsets of $\{2,3,\ldots n\}$ of size $i-1$ in weightlex order, which generates exactly the sequence $\diff(n-1,i-1)$. If we ignore element 1 in sets not containing it, we are looking at subsets of $\{2,3,\ldots n\}$ of size $i$ in weightlex order, which generates exactly the sequence $(n-1,i)$. Since these match for all terms of the sequence besides the beginning and end, we only need note that the first entry of each is $n-i$ and the entry for $\{2,3,\ldots,i\}$ is $n-i-1$, so both sequences match at the ``connecting'' ends. 
\end{proof}
In proving both Lemma~\ref{lem:subsequence} and Proposition~\ref{prop:subsequence}, we see that the first vertex of weightlex-odd order is unusual in that it is the only vertex which has a neighbor in a lower layer which has not been covered before. If we ignored that lower neighbor, we could actually extend our argument to every layer besides the 0th one.
\begin{example}
The subsequence corresponding to $\binom{[5]}{1}$ in $Q^5$ is $(5,3,2,1,0)$ rather than the expected $(4,3,2,1,0)$. If we ignored the vertex corresponding to $00000$, however, we would get our expected sequence.
\end{example}
\begin{proof}[Proof of Proposition \ref{prop:difference}]
Using the notation as above, for $i\geq 3$, we can see that $$\diff(n,i)=\diff(n-1,i-1)\cdot\diff(n-1,i)=(n-1-(i-1))\up{i-1}\cdot(n-1-i)\up{i}=(n-i)\up{i}$$ by definition. For i=0, we have that the difference sequence is $n=n\up{0}$ as desired. Lastly for i=1, as per our discussion above, if we ignore the lower neighbor, we have $(n,1)=(n-1)\up{1}=(n-1)\up{0}\cdot(n-2)\up{1}$. Adding the lower neighbor back to the first entry, we get $n\cdot(n-2)\up{1}$.
\end{proof}
Now that we have a handle on the difference sequence for $Q^n$, we would like to get a grasp of where the local maxima of $\munsp(k)-k$ occur. 
\begin{prop}\label{maxmunup}
For $n>i$, if we decompose $n\up{i}$ as $n\up{i-1}(n-1)\up{i-1}\ldots 0\up{i-1}$, the last instance of the maximum value of $\munsp(k)-k$ for this subsequence occurs within $(i)\up{i-1}$. For $n\leq i$, the last instance of the maximum value of $\munsp(k)-k$ in this subsequence occurs in $n\up{i-1}$.
\end{prop}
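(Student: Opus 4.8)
The plan is to read the quantity $\munsp(k)-k$, restricted to the subsequence under consideration, as a sequence of partial sums. Writing the subsequence as $a=n\up{i}=(a_1,\dots,a_L)$ and setting $g(s)=\sum_{t\le s}(a_t-1)$, along the relevant layer $\munsp(k)-k$ equals a constant (its value where the layer begins) plus $g(s)$; since that constant is independent of $s$, the last maximizer of $\munsp(k)-k$ inside the subsequence is exactly the last maximizer of $g$. So I would work with $g$ and with the block decomposition $n\up{i}=n\up{i-1}(n-1)\up{i-1}\cdots 0\up{i-1}$ into the pieces $m\up{i-1}$, $m=n,\dots,0$. For a finite sequence $c$ let its \emph{peak} be $\max_{s}\big(\sum_{t\le s}c_t-s\big)$, and write $\phi_{j}(M)$ for the peak of $M\up{j}$; this measures how high $g$ climbs inside a block relative to that block's starting height.

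First I would record the coarse behaviour of $g$ across whole blocks. By the preceding lemma $\lvert m\up{i-1}\rvert=\binom{m+i-1}{i-1}$ and $\arrowsum(m\up{i-1})=\binom{m+i-1}{i}$, so the net change of $g$ over the block $m\up{i-1}$ is $\binom{m+i-1}{i}-\binom{m+i-1}{i-1}$, which a one-line Pascal computation shows to be positive, zero, or negative according as $m>i$, $m=i$, or $m<i$. Consequently the values of $g$ at the block boundaries strictly increase across the blocks $m=n,\dots,i+1$, stay constant across block $m=i$, and strictly decrease across $m=i-1,\dots,0$; in particular the boundary value at the start of block $i$ is the largest boundary value, call it $v_i$.

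This pins the maximum near block $i$ up to what happens inside each block. The overall maximum of $g$ is $\max_m\big(v_m+\phi_{i-1}(m)\big)$, where $v_m$ is the boundary value at the start of block $m$. Using $v_i=\max_m v_m$, the assertion for $n>i$ reduces to two comparisons: that no earlier block overshoots, namely $\phi_{i-1}(m)-\phi_{i-1}(i)\le v_i-v_m$ for $m>i$; and that no later block ties, for which it suffices that $\phi_{i-1}$ is nondecreasing and strictly increasing from $i-1$ to $i$, since among $m<i$ one has $v_m\le v_i$ with equality only at $m=i-1$. As $v_i-v_m=\sum_{t=i+1}^{m}\big(\binom{t+i-1}{i}-\binom{t+i-1}{i-1}\big)$, the first comparison follows by summation from the single-step bound
\[
\phi_{i-1}(M)-\phi_{i-1}(M-1)\ \le\ \binom{M+i-1}{i}-\binom{M+i-1}{i-1}\qquad (M>i).
\]
The case $n\le i$ is easier: every block is then net-neutral or net-losing, so $g$ peaks in the first block $n\up{i-1}$ and the same monotonicity facts exclude later ties, which is precisely the claim.

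The main obstacle is controlling the interior peaks $\phi_{i-1}(M)$, which have no tidy closed form, and in particular proving the displayed single-step bound. I would do this by induction on the level using the recursion $M\up{i-1}=M\up{i-2}\,(M-1)\up{i-1}$, which gives
\[
\phi_{i-1}(M)=\max\Big(\phi_{i-2}(M),\ \big(\arrowsum(M\up{i-2})-\lvert M\up{i-2}\rvert\big)+\phi_{i-1}(M-1)\Big).
\]
The crux is that for $M>i$ the second argument dominates, so the peak of $M\up{i-1}$ is realised in its trailing copy $(M-1)\up{i-1}$ and hence $\phi_{i-1}(M)-\phi_{i-1}(M-1)=\arrowsum(M\up{i-2})-\lvert M\up{i-2}\rvert$. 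Substituting this into the single-step bound turns it into $\big(\arrowsum(M\up{i-2})-\lvert M\up{i-2}\rvert\big)\le\big(\arrowsum(M\up{i-1})-\lvert M\up{i-1}\rvert\big)$, which simplifies by Pascal's rule to $\binom{M+i-2}{i-1}\le\binom{M+i-2}{i}$; this holds precisely for $M>i$, with equality at $M=i+1$ (explaining the tie between blocks $i$ and $i+1$ seen in the examples). The dominance claim that drives the induction reduces, after the analogous substitution, to the same family of binomial inequalities, and the monotonicity of $\phi_{i-1}$ is a routine by-product. Once these are in place the two comparisons close, and the last maximizer lands inside $(i)\up{i-1}$.
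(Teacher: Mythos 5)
Your proposal is correct in substance and runs on the same engine as the paper's proof: induction on the level, the identical decomposition $n\up{i}=n\up{i-1}(n-1)\up{i-1}\cdots 0\up{i-1}$, and the gainer/neutral/loser trichotomy for $\arrowsum(m\up{i-1})-\lvert m\up{i-1}\rvert$ according as $m>i$, $m=i$, $m<i$. Where you genuinely differ is in how the cross-block comparisons are executed. The paper argues by tracking repeated copies of common subsequences at different heights: every block $k\up{i-1}$ with $k\ge i$ ends in the common suffix $(i-1)\up{i-1}$, so its last maximum sits (by induction) in a copy of $(i-1)\up{i-2}$ whose height is compared across blocks, and for $k<i$ the copy of $k\up{k-1}$ heading block $k$ is compared against the copy embedded inside $i\up{i-1}$. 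You instead introduce the peak functions $\phi_{i-1}$ and prove a telescoping single-step increment bound by Pascal's rule. Your quantitative route buys real precision: the paper justifies its height comparisons only by ``$k\up{i-2}$ is a net gainer,'' when the comparison actually needed is $\arrowsum(k\up{i-2})-\lvert k\up{i-2}\rvert\le\arrowsum(k\up{i-1})-\lvert k\up{i-1}\rvert$, which is exactly your binomial inequality; and your computation isolates the equality case $M=i+1$, i.e.\ blocks $i+1$ and $i$ attain equal maxima, a tie the paper's ``strictly higher'' phrasing glosses over and which is precisely why the statement must concern the \emph{last} instance of the maximum.

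The one step you have not actually backed with an argument is the monotonicity of $\phi_{i-1}$, which you call routine. It is routine only for $M\ge i-1$, where $\arrowsum(M\up{i-2})-\lvert M\up{i-2}\rvert\ge 0$ makes $\phi_{i-1}(M)\ge\phi_{i-1}(M-1)$ immediate from the recursion; for $M<i-1$ that net term is negative and the recursion alone gives nothing. This is fixable inside your framework, but it requires the induction hypothesis at \emph{all} lower levels (strong induction on the level): the $n\le i$ case of the proposition at levels $i-1,i-2,\dots,m$ yields $\phi_{i-1}(m)=\phi_{m-1}(m)$, and then the chain $\phi_{i-1}(i)>\phi_{i-2}(i-1)>\cdots>\phi_{m}(m+1)>\phi_{m-1}(m)$, each step using the recursion together with the strictly positive net of $(j+1)\up{j-1}$ and the identity $\phi_j(j)=\phi_{j-1}(j)$, gives the needed strict inequality $\phi_{i-1}(i)>\phi_{i-1}(m)$ for all $m<i$. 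Alternatively, the paper's embedded-copy argument for the trailing blocks sidesteps peak monotonicity entirely. With one of these spelled out, your proof closes.
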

\begin{proof}
By induction, we know that the last instance of the maximum value of each $k\up{i-1}$ occurs within $(i-1)\up{i-2}$ for any $k\geq i$. By the decomposition given in the definition, we have for $k\geq i$ that $k\up{i-1}=k\up{i-2}(k-1)\up{i-2}\ldots i\up{i-2} (i-1)\up{i-1}$. But since for $k\geq i$, $k\up{i-2}$ is a net gainer, each time we see the sequence $(i-1)\up{i-2}$ before $i\up{i-1}$, that sequence occurs higher than the previous one. Since $i\up{i-1}=i\up{i-2}(i-1)\up{i-2}$, and $i\up{i-2}$ is a net gainer, the maximum within $i\up{i-1}$ is higher than the maximum within $(i-1)\up{i-1}$. For $k\leq(i-1)$, $k\up{i-1}=k\up{k-1}(k-1)\up{k}(k-1)\up{k+1}\ldots(k-1)\up{i-1}$, by repeated use of the induction hypothesis, we find the maximum is in $k\up{k-1}$. But this subsequence occurs in $i\up{i-1}$ with net gainers preceding it, so it occurs lower in $k\up{i-1}$ than in $i\up{i-1}$. Since we have shown the local maxima all occur lower to the right and no higher to the left than in $i\up{i-1}$, the last maximum must appear in $i\up{i-1}$.

If $n\leq i$, we can decompose $n\up{i}=(n-1)\up{i-1}\ldots 0\up{i-1}$. Inductively, we get that the local maxima of each subsequence is $k\up{i-2}$. But each of these subsequences occur in $n\up{i-1}$ preceded by net losers, so each local maxima occurs lower than the previous. 
\end{proof}

\begin{lemma}\label{messlemma}
In $n\up{i}$, the position of a maximum value of $\munsp(k)-k$ occurs at position $$k=1+\sum\limits_{j=1}^{i-1}\binom{2j}{j-1}+\sum\limits_{j=i+1}^{n}\binom{j+i-1}{i-1}.$$ With value
$$\max=\biggl(\sum\limits_{j=1}^{i-1}\binom{2j}{j}+\sum\limits_{j=i+1}^{n}\binom{j+i-1}{i}\biggr)-k.$$
\end{lemma}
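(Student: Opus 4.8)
The plan is to induct on the superscript $i$, using Proposition~\ref{maxmunup} to pin down which sub-block of $n\up{i}$ contains the last maximizer of $\munsp(k)-k$, and then to track two running quantities as we descend through the nested decomposition: the position $k$ and the partial sum $\munsp(k)$. Throughout I would write $L(m,i)=\lvert m\up{i}\rvert=\binom{m+i}{i}$ for the length of a block and $T(m,i)=\arrowsum(m\up{i})=\binom{m+i}{i+1}$ for its total, the closed forms already established.

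First I would set up the governing recursion. Decompose $n\up{i}=n\up{i-1}(n-1)\up{i-1}\cdots 0\up{i-1}$. By Proposition~\ref{maxmunup}, for $n>i$ the last maximizer lies inside the block $i\up{i-1}$. Since the blocks $m\up{i-1}$ for $m=n,n-1,\ldots,i+1$ are processed first, at the start of the block $i\up{i-1}$ the accumulated position is $\sum_{m=i+1}^{n}L(m,i-1)=\sum_{m=i+1}^{n}\binom{m+i-1}{i-1}$ and the accumulated partial sum is $\sum_{m=i+1}^{n}T(m,i-1)=\sum_{m=i+1}^{n}\binom{m+i-1}{i}$. Because passing from local to global coordinates adds these two constants to $k$ and to $\munsp(k)$ respectively, the \emph{location} of the within-block maximizer is unchanged and its value is shifted by their difference; hence the within-block analysis of $i\up{i-1}$ is exactly the $(i,i-1)$ instance of the lemma, with these offsets added.

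The key simplification is that the sub-problem $i\up{i-1}$ is itself of the form $(\ell+1)\up{\ell}$ with $\ell=i-1$, so at every recursive level exactly one block precedes the maximizing block. Indeed, the last maximizer of $(\ell+1)\up{\ell}$ lies in $\ell\up{\ell-1}$, preceded only by $(\ell+1)\up{\ell-1}$, which contributes $L(\ell+1,\ell-1)=\binom{2\ell}{\ell-1}$ to the position and $T(\ell+1,\ell-1)=\binom{2\ell}{\ell}$ to the partial sum. Iterating from $\ell=i-1$ down to $\ell=1$ produces the telescoping sums $\sum_{\ell=1}^{i-1}\binom{2\ell}{\ell-1}$ and $\sum_{\ell=1}^{i-1}\binom{2\ell}{\ell}$. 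The recursion bottoms out at $1\up{0}=(1)$, whose last maximizer of $\munsp(k)-k$ sits at its single position $k=1$; this contributes the leading $1$ in the position formula and fixes the additive constant in the partial sum. Collecting the position offsets yields $k=1+\sum_{\ell=1}^{i-1}\binom{2\ell}{\ell-1}+\sum_{m=i+1}^{n}\binom{m+i-1}{i-1}$, collecting the partial-sum offsets yields $\munsp(k)$, and subtracting $k$ gives the stated value.

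The main obstacle is the bookkeeping at block boundaries, precisely the subtlety that already appeared in Lemma~\ref{lem:subsequence}: when consecutive blocks are concatenated, their shared ``connecting'' entries must be counted exactly once so that the maximizing index is not miscounted by one, and one must confirm that the last maximizer genuinely sits at the final position of the base block rather than at an interior tie. Verifying the strict inequality that forces the last maximizer into $i\up{i-1}$ (rather than a later, equal-height copy further along) is exactly what Proposition~\ref{maxmunup} supplies, so once the recursion is in place the remaining work is to check the base case $1\up{0}$ and to confirm that the offsets telescope to the claimed binomial sums, both of which are routine.
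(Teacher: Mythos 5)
Your proposal is correct and takes essentially the same approach as the paper: both arguments repeatedly apply Proposition~\ref{maxmunup} to descend through the nested decomposition $n\up{i}=n\up{i-1}\cdots(i+1)\up{i-1}\,i\up{i-2}\,(i-1)\up{i-3}\cdots 2\up{0}\,1\,w$, noting that at each level exactly one block precedes the maximizing block, bottoming out at $1\up{0}=(1)$, and then summing the block lengths $\binom{j+i-1}{i-1}$, $\binom{2j}{j-1}$ and block totals $\binom{j+i-1}{i}$, $\binom{2j}{j}$ to get the stated position and value. Your write-up simply makes explicit the coordinate-shift and telescoping bookkeeping that the paper's terse proof leaves implicit.
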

\begin{proof}
We decompose $n\up{i}=n\up{i-1}\ldots (i+1)\up{i-1}i\up{i-2}(i-1)\up{i-3}\ldots 2\up{0}1w$, where $w$ is the remainder of the sequence. We claim that the last maximum occurs at the 1 before the $w$. By repeatedly applying Proposition \ref{maxmunup}, we can iteratively find the subsequence containing the last maximum. This process terminates within $1\up{0}=1$, hence that $1$ is a max. Using the length and sum formulas for that index, we get the statement of the lemma.
\end{proof}

Using the above lemma, it is simple to calculate that in $Q^n$, the last maximum must occur in layer $\lceil n/2 \rceil$ . 
\begin{lemma}\label{lem:cumbersomeqn}
In $Q^n$, the maximum value of $\munsp(k)-k$ is 

$$\sum\limits_{i=0}^{\lceil n/4\rceil -1}\binom{n}{2i+1} - \sum\limits_{i=0}^{\lceil n/4\rceil -1}\binom{n}{2i}-\sum\limits_{i=1}^{\lfloor n/2 \rfloor-1}\binom{2i}{i-1} + \sum\limits_{i=1}^{\lfloor n/2 \rfloor - 1}\binom{2i}{i}, \text{for } n=0,3 \pmod 4$$
$$\sum\limits_{i=0}^{\lceil n/4 \rceil-1}\binom{n}{2i} - \sum\limits_{i=0}^{\lceil n/4\rceil -1}\binom{n}{2i-1}-\sum\limits_{i=1}^{\lfloor n/2 \rfloor-1}\binom{2i}{i-1} + \sum\limits_{i=1}^{\lfloor n/2 \rfloor - 1}\binom{2i}{i}, \text{for } n=1,2 \pmod 4.$$
\end{lemma}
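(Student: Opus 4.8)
The plan is to read the maximum of $\munsp(k)-k$ directly off the layered structure of the difference sequence. By Proposition~\ref{prop:subsequence} the difference sequence of $Q^n$ (restricted to one part) is the concatenation of the blocks $(n-i)\up{i}$, one for each layer $i$, and the remark preceding the lemma already locates the last global maximum inside the block for layer $\ell=\lceil n/2\rceil$. Writing $k^{*}$ for the position of that maximum, I would split
\[
\munsp(k^{*})-k^{*}=\underbrace{\sum_{j<\ell}\bigl(\arrowsum((n-j)\up{j})-\lvert (n-j)\up{j}\rvert\bigr)}_{\text{offset}}+\underbrace{\Bigl(\text{max of running }\munsp-k\text{ inside block }\ell\Bigr)}_{\text{within-layer max}},
\]
where the sum ranges over the layers of the same parity as $\ell$. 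Since $u^{E}(Q^n)=u^{O}(Q^n)$ by Lemma~\ref{munqn}, I am free to work with whichever parity makes $\ell$ an actual layer.

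For the offset, the length and arrow-sum formulas give $\lvert (n-j)\up{j}\rvert=\binom{n}{j}$ and $\arrowsum((n-j)\up{j})=\binom{n}{j+1}$, so each completed block contributes a net change $\binom{n}{j+1}-\binom{n}{j}$. Summing over the even layers $j=0,2,\dots,\ell-2$ (the case $\ell$ even) gives $\sum_{i=0}^{\lceil n/4\rceil-1}\bigl(\binom{n}{2i+1}-\binom{n}{2i}\bigr)$, the first two sums of the first displayed formula; summing over the odd layers produces the analogous $\sum_{i}\bigl(\binom{n}{2i}-\binom{n}{2i-1}\bigr)$. The one subtlety is the anomalous first odd layer, whose block is $n\cdot(n-2)\up{1}$ rather than $(n-1)\up{1}$; its extra $+1$ is exactly the $i=0$ term $\binom{n}{0}-\binom{n}{-1}=1$ in the odd-case formula.

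For the within-layer maximum I would run the descent from the proof of Lemma~\ref{messlemma}: applying Proposition~\ref{maxmunup} repeatedly to $(n-\ell)\up{\ell}$ (here $n-\ell=\lfloor n/2\rfloor\le\ell$, the boundary regime) chases the last maximum down through nested blocks to the turning blocks $i\up{i-1}$, each of which adds $\binom{2i}{i}-\binom{2i}{i-1}$ to the running total. This gives the common tail $\sum_{i=1}^{\lfloor n/2\rfloor-1}\bigl(\binom{2i}{i}-\binom{2i}{i-1}\bigr)$ that appears in both cases.

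Adding the offset to the within-layer maximum, and then sorting out whether $\ell=\lceil n/2\rceil$ is even (precisely $n\equiv 0,3\pmod 4$) or odd ($n\equiv 1,2\pmod 4$), yields the two displayed formulas. I expect the main obstacle to be exactly this bookkeeping: verifying the sum endpoints $\lceil n/4\rceil-1$ and $\lfloor n/2\rfloor-1$, keeping the offset's parity of layers aligned, and tracking the off-by-one produced jointly by the anomalous first layer and by the exact spot inside block $\ell$ where the maximum is attained. No single step is deep, but each is an easy place to slip by one, so I would first nail down all four congruence classes on the small cases $n=4,5,6,7$ before asserting the general binomial identities.
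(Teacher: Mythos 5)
Your proposal is correct and follows essentially the same route as the paper's (much terser) proof: the paper likewise splits the maximum into the net contributions $\binom{n}{j+1}-\binom{n}{j}$ of the completed same-parity layers below $\lceil n/2\rceil$ (the first two sums, with parity determined by $n \bmod 4$) plus the within-layer maximum of the block for layer $\lceil n/2\rceil$ obtained from Lemma~\ref{messlemma} (the last two sums). Your version actually spells out details the paper leaves implicit, notably the $\arrowsum$/length computation per block and the anomalous layer-$1$ block $n\cdot(n-2)\up{1}$ accounting for the $i=0$ term $\binom{n}{0}-\binom{n}{-1}=1$.
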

\begin{proof}
The last two summations come from substituting $\lceil n/2 \rceil\up{\lceil n/2 \rceil-1}$ into Lemma \ref{messlemma}, which corresponds to the difference subsequence for layer $\lceil n/2 \rceil$ of $Q^n$. The first two summations come from considering the contributions to $\munsp(k)-k$ from the odd layers up to that point for ($n=1,2$ mod $4$) and the even layers (for $n=0,3$ mod $4$).
\end{proof}
This value is cumbersome, but turns out to collapse quite nicely.
\begin{theorem}\label{thm:qnmagic}
$$\Hun(Q^n)=1+\sum\limits_{i=0}^{n-2} \binom{i}{\lfloor i/2 \rfloor}.$$
\end{theorem}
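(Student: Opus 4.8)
The plan is to start from the reduction already in hand: by Theorem~\ref{thm:qn} we have $\Hun(Q^n)=u^E(Q^n)+1$, and Lemma~\ref{lem:cumbersomeqn} evaluates $u^E(Q^n)=\max_k\{\munsp(k)-k\}$ as the two-case cumbersome expression. So it suffices to show that, in \emph{both} residue cases, that expression collapses to $\sum_{i=0}^{n-2}\binom{i}{\lfloor i/2\rfloor}$; adding $1$ then yields the theorem. I would organize the collapse around the natural split of the cumbersome formula into a ``layer-contribution'' part (the two sums of $\binom{n}{\cdot}$) and a ``Catalan'' part (the two sums $\sum\binom{2i}{i}$ and $\sum\binom{2i}{i-1}$).

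First I would dispatch the layer-contribution part. In the case $n\equiv 1,2 \pmod 4$ it is $\sum_{i=0}^{\lceil n/4\rceil-1}\big(\binom{n}{2i}-\binom{n}{2i-1}\big)$, which is exactly the truncated alternating row sum $\sum_{k=0}^{2\lceil n/4\rceil-2}(-1)^k\binom{n}{k}$; by the standard identity $\sum_{k=0}^{j}(-1)^k\binom{n}{k}=(-1)^j\binom{n-1}{j}$ this equals $\binom{n-1}{2\lceil n/4\rceil-2}$. The case $n\equiv 0,3$ is the same computation shifted by one and produces $\binom{n-1}{2\lceil n/4\rceil-1}$. The pleasant point --- and the reason the case split is cosmetic --- is that a short floor/ceiling check shows both indices equal $\lfloor (n-1)/2\rfloor$, so in all four residues the layer part is the single central binomial coefficient $\binom{n-1}{\lfloor(n-1)/2\rfloor}$. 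The remaining two sums combine into $\sum_{i=1}^{\lfloor n/2\rfloor-1}\big(\binom{2i}{i}-\binom{2i}{i-1}\big)$, a partial sum of Catalan numbers.

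It then remains to prove the clean identity
\[
\sum_{i=0}^{n-2}\binom{i}{\lfloor i/2\rfloor}=\binom{n-1}{\lfloor(n-1)/2\rfloor}+\sum_{i=1}^{\lfloor n/2\rfloor-1}\Big(\binom{2i}{i}-\binom{2i}{i-1}\Big),
\]
which I would do by induction on $n$, comparing first differences. The left side increases by $\binom{n-1}{\lfloor(n-1)/2\rfloor}$ when passing from $n$ to $n+1$, and I would show the right side does too, splitting on the parity of $n$: for $n=2t$ the Catalan sum is unchanged and the central-binomial term jumps by $\binom{2t}{t}-\binom{2t-1}{t-1}=\tfrac12\binom{2t}{t}=\binom{2t-1}{t-1}$; for $n=2t+1$ one new Catalan term $\binom{2t}{t}-\binom{2t}{t-1}$ appears while the central term jumps by $\binom{2t+1}{t}-\binom{2t}{t}=\binom{2t}{t-1}$ (Pascal), and these add to $\binom{2t}{t}$. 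In each case the increment matches, and the base cases $n=2,3$ are immediate, closing the induction.

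The main obstacle I anticipate is purely the index bookkeeping in the second step: confirming that the ceiling-laden summation limits of Lemma~\ref{lem:cumbersomeqn} really do realign to $\lfloor(n-1)/2\rfloor$ across all four residues mod $4$, and that the truncated alternating sum is set up with the correct final sign so that the partial-sum identity applies. Everything after that is a routine Pascal-rule induction; in particular, framing the final step as a first-difference induction is what lets me avoid needing any closed form for the Catalan partial sum, which has none.
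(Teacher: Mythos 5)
Your proof is correct, and it reorganizes the computation in a genuinely different way from the paper. The paper also runs a first-difference induction on $n$ powered by Pascal's identity, but it differences the \emph{raw} expressions of Lemma~\ref{lem:cumbersomeqn} at $n$ and $n-1$ directly, which forces a case analysis on $n \bmod 4$ (two of the four residue cases are written out), with the Catalan-type sums largely cancelling inside that difference. You instead collapse the cumbersome expression at each \emph{fixed} $n$ first, via the truncated alternating-row-sum identity $\sum_{k=0}^{j}(-1)^k\binom{n}{k}=(-1)^j\binom{n-1}{j}$, obtaining the uniform intermediate closed form
\[
u^E(Q^n)=\binom{n-1}{\lfloor (n-1)/2\rfloor}+\sum_{i=1}^{\lfloor n/2\rfloor-1}\left(\binom{2i}{i}-\binom{2i}{i-1}\right),
\]
which the paper never states; your index checks that $2\lceil n/4\rceil-2$ (for $n\equiv 1,2$) and $2\lceil n/4\rceil-1$ (for $n\equiv 0,3$) both equal $\lfloor (n-1)/2\rfloor$ are right, and they show the mod-$4$ split in Lemma~\ref{lem:cumbersomeqn} is indeed cosmetic. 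Your closing induction then needs only a parity split, and both increments check out: for $n=2t$ the right side jumps by $\binom{2t}{t}-\binom{2t-1}{t-1}=\binom{2t-1}{t-1}$, and for $n=2t+1$ the new Catalan term plus the Pascal jump give $\left(\binom{2t}{t}-\binom{2t}{t-1}\right)+\binom{2t}{t-1}=\binom{2t}{t}$, matching the left side's increment $\binom{n-1}{\lfloor (n-1)/2\rfloor}$ in each case; the identity also verifies numerically for $n=4,5,6,7$ ($4,7,13,23$). What each approach buys: yours isolates a clean, reusable intermediate identity (layer part equals a single central binomial coefficient, remainder is a partial sum of Catalan numbers), cuts four residue cases down to two parity cases, and supplies explicit base cases $n=2,3$, which the paper leaves implicit; the paper's version avoids introducing any intermediate formula but pays with heavier bookkeeping across residues. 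Both arguments rest on the same external inputs, Theorem~\ref{thm:qn} and Lemma~\ref{lem:cumbersomeqn}, so they stand or fall together on those.
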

\begin{proof}
We prove it by induction on $n$. We split these cases based on the value of $n \mod 4$. In both cases, we reduce to an alternating sum and repeatedly apply Pascal's identity at that point.

If $n\equiv 1 \pmod 4$, by Lemma \ref{lem:cumbersomeqn}, 
\begin{align*}
\Hun(Q^n)-1 - \Hun(Q^{n-1})-1=&~\sum\limits_{i=0}^{\lceil n/4\rceil -1}\binom{n}{2i} - \sum\limits_{i=0}^{\lceil n/4\rceil -1}\binom{n}{2i-1}-\sum\limits_{i=1}^{\lfloor n/2 \rfloor-1}\binom{2i}{i-1} 
\\ &+ \sum\limits_{i=1}^{\lfloor n/2 \rfloor - 1}\binom{2i}{i} - \sum\limits_{i=0}^{\lceil (n-1)/4\rceil -1}\binom{n-1}{2i+1} 
\\ &- \sum\limits_{i=0}^{\lceil (n-1)/4\rceil -1}\binom{n-1}{2i}-\sum\limits_{i=1}^{\lfloor (n-1)/2 \rfloor-1}\binom{2i}{i-1} 
\\ &+\sum\limits_{i=1}^{\lfloor (n-1)/2 \rfloor - 1}\binom{2i}{i}
\\ =&~  \binom{n}{0}+ \sum\limits_{i=0}^{\lceil n/4 \rceil-1} (-1)^{i+1}\binom{n}{i+1}+(-1)^{i}\binom{n-1}{i}
\\ =& \sum\limits_{i=0}^{\lceil n/4 \rceil-1}(-1)^{i}\binom{n-1}{i}
\\ =& \binom{n-2}{(n-2)/2}.
\end{align*}
If $n\equiv 2 \pmod 4$, by Lemma \ref{lem:cumbersomeqn}, 
\begin{align*}
\Hun(Q^n)-1 - \Hun(Q^{n-1})-1=~&\sum\limits_{i=0}^{\lceil n/4\rceil -1}\binom{n}{2i} - \sum\limits_{i=0}^{\lceil n/4\rceil -1}\binom{n}{2i-1}-\sum\limits_{i=1}^{\lfloor n/2 \rfloor-1}\binom{2i}{i-1}
\\ + \sum\limits_{i=1}^{\lfloor n/2 \rfloor - 1}&\binom{2i}{i} - \sum\limits_{i=0}^{\lceil (n-1)/4\rceil -1}\binom{n-1}{2i} - \sum\limits_{i=0}^{\lceil (n-1)/4\rceil -1}\binom{n-1}{2i-1}
\\ &-\sum\limits_{i=1}^{\lfloor (n-1)/2 \rfloor-1}\binom{2i}{i-1} + \sum\limits_{i=1}^{\lfloor (n-1)/2 \rfloor - 1}\binom{2i}{i}
\end{align*}
\begin{align*}
\\ =& \binom{n-2}{(n-2)/2} - \binom{n-2}{(n-4)/2} + \sum\limits_{i=0}^{\lceil n/2 \rceil -1}(-1)^{i}\left[\binom{n}{i}-\binom{n-1}{i}\right]
\\ =& \binom{n-2}{(n-2)/2} - \binom{n-2}{(n-4)/2}+ \sum\limits_{i=0}^{\lceil n/2 \rceil -2}(-1)^{i-1}\binom{n-1}{i}
\\ =& \binom{n-2}{(n-2)/2}+0
\\ =& \binom{n-2}{(n-2)/2}.
\end{align*}

The case of $n\equiv 3$ is similar to $n\equiv 1$, and the case of $n \equiv 0$ is similar to $n \equiv 2$.

\end{proof}
%----------------------------------------------------------------------------------------------------------------------------------------------

\section{Deaf Rabbit}
For practical purposes, it may be more interesting to consider the case where we allow the rabbit the flexibility to not move during a time step. This is equivalent to adding a loop to every vertex. We refer to this generalization as the hunter problem with a deaf rabbit, denoted Hun$[G]$. To revisit the earliest examples, we can no longer get away with one hunter for anything but the simplest of graphs, as the ``loop'' makes every non-isolated vertex at least degree 2. We can win with just 2 hunters for the path, by taking the basic strategy for the path and having the second hunter follow the strategy delayed by one time step. By adding a ``delayed'' hunter to the strategy for the cycle, we also have that $\Hun[C_n]=3$.

While we could attempt to establish a weak upper bound of $2\Hun(G)\geq \Hun\left[G\right]$ by using the time-delayed hunter modification, this will only work for suitable strategies. If an optimal strategy did not visit every vertex, for example, then a deaf rabbit could stay on an unvisited vertex forever. Even if an optimal strategy visits every vertex, we could conceivably not visit some vertex ``enough''. Amazingly, the strategies for the hypercube and similar graphs hold even in the deaf rabbit case.
\begin{definition}
Let $\mun[k]:= \min\{\lvert N[W] \rvert: W \subseteq V(G), \lvert W \rvert = k\}$.
\end{definition}
There is a natural extension of isoperimetric nesting in the closed neighborhood case as well.
\begin{definition}
We say that a graph has \it{closed isoperimetric nesting} if there is a chain of subsets $\mathcal{G}=(G_1,G_2,\ldots,G_n)$ satisfying the following conditions:
\begin{enumerate}
\item $\vert G_i \vert= i$.
\item $G_i \subset G_{i+1}$.
\item $\vert N[G_i] \vert = \mun[i]$. 
\item $N[G_i] = G_{\vert N[G_i]\vert}$.  
\end{enumerate}
\end{definition}
This leads to the analogous result to Theorem \ref{thm:main}:
\begin{theorem}
If G is a graph that has closed isoperimetric nesting, then $$\Hun[G]=1+\max_k\{\mun[k]-k\}.$$
\end{theorem}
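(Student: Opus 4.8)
The plan is to prove the equality by matching a lower and an upper bound that run in exact parallel to Proposition \ref{prop:mun} and Theorem \ref{thm:main}, but with open neighborhoods systematically replaced by closed ones. The one structural change I would record at the outset is the deaf-rabbit recursion: since a deaf rabbit sitting on a surviving vertex $v \notin H_i$ may either stay or step to a neighbor, the possible-position update becomes $\possr(i) = N[\possr(i-1) \setminus H_i]$, the closed neighborhood of the surviving set. I would also note the elementary monotonicity $\mun[k] \le \mun[k+1]$, which holds because deleting a vertex from a $(k{+}1)$-set only shrinks its closed neighborhood.

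For the lower bound I would argue $\Hun[G] > \max_k\{\mun[k]-k\}$ for every graph, with no nesting needed. The cleanest route is to observe that the deaf-rabbit game on $G$ is literally the ordinary game on the graph $G^{\circ}$ obtained by adding a loop at each vertex; since open neighborhoods in $G^{\circ}$ are exactly the closed neighborhoods of $G$, the minimum-union function of $G^{\circ}$ coincides with $\mun[\cdot]$, and Proposition \ref{prop:mun} applies verbatim. To keep things self-contained I would also redo the induction directly: fixing $m$ hunters with $m \le M$, where $M = \max_k\{\mun[k]-k\}$, and a positive maximizer $k^{*}$ with $\mun[k^{*}] = k^{*}+M$, I would maintain the invariant $r_H(i) \ge \mun[k^{*}]$. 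The base case holds since $r_H(0) = |V(G)| \ge \mun[k^{*}]$, and the inductive step uses $|\possr(i) \setminus H_{i+1}| \ge r_H(i) - m$ together with monotonicity to give $r_H(i+1) \ge \mun[r_H(i)-m] \ge \mun[k^{*}]$. Since $\mun[k^{*}] = k^{*}+M > M \ge m$, the hunters can never exhaust the possible positions, so $\Hun[G] \ge M+1$.

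For the upper bound I would use the chain $\mathcal{G} = (G_1,\dots,G_n)$ from closed isoperimetric nesting to drive the same truncation strategy as in Theorem \ref{thm:main}, now with a single order rather than two alternating parity orders. Taking $m = 1+M$ and $H_{i+1} = G_{r_H(i)} \setminus G_{r_H(i)-m}$ (the last $m$ chain vertices of $\possr(i)$), I would show inductively that $\possr(i)$ is always an initial segment $G_{r_H(i)}$: if $\possr(i) = G_r$ with $r > m$, then $\possr(i) \setminus H_{i+1} = G_{r-m}$, and condition (4) of closed isoperimetric nesting gives $\possr(i+1) = N[G_{r-m}] = G_{\mun[r-m]}$, again an initial segment. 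The strict decrease is then immediate and, pleasantly, needs no two-step parity argument: from $\mun[r-m] - (r-m) \le M = m-1$ we get $r_H(i+1) = \mun[r-m] \le r-1 < r_H(i)$, while $r \le m$ forces every surviving vertex to be shot and $\possr(i+1) = \emptyset$. Thus $\{r_H(i)\}$ strictly decreases to $0$ and the rabbit is caught with $m = 1+M$ shots per step.

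The main obstacle is not the deafness itself — once the recursion is phrased with $N[\cdot]$, everything tracks the undirected case — but keeping the lower-bound bookkeeping honest. The correct rigorous inequality is $r_H(i+1) \ge \mun[r_H(i)-m]$, obtained from monotonicity applied to the survivor set, rather than the superficially similar $\mun[r_H(i)]-m$; anchoring the invariant at a maximizer $k^{*}$ is precisely what makes the monotone estimate close up. Everything else — that truncated initial segments again map to initial segments, and the single-step strict decrease — is handed to us by conditions (3)–(4) of closed isoperimetric nesting and the choice $m = 1+M$.
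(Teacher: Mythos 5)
Your proof is correct, and it follows the route the paper intends: the paper's entire proof of this theorem is the remark that the argument of Theorem~\ref{thm:main} carries over with closed neighborhoods replacing open ones, which is exactly your combination of the truncation strategy (upper bound) with a Proposition~\ref{prop:mun}-type isoperimetric lower bound. Two of your refinements go beyond a mechanical transcription and are worth recording. First, your lower-bound bookkeeping --- the invariant $r_H(i)\ge \mun[k^{*}]$ anchored at a maximizer $k^{*}$, closed up by applying the monotonicity $\mun[k]\le \mun[k+1]$ to the survivor set --- is not merely cosmetic: the inequality the paper literally writes in Proposition~\ref{prop:mun}, namely $r_H(i+1)\ge \mun(r_H(i))-m$, is unjustified as stated and can fail (on the star $K_{1,n}$ with one hunter shooting the center, $r_H(1)=1$ while $\mun(r_H(0))-1=n$, so the inequality fails for $n\ge 2$, even though the proposition's conclusion is still true there). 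Your anchored-maximizer induction is the correct repair of the very step that the closed-neighborhood version inherits, and your loop-graph reduction then transfers it verbatim to $\Hun[G]$. Second, you make explicit that with a single nesting chain the strict decrease $r_H(i+1)<r_H(i)$ happens at every step, so no analogue of the two-step parity argument or of the hypothesis $\vert u^E(G)-u^O(G)\vert \le 1$ from Theorem~\ref{thm:main} is needed; this simplification is real, and it is precisely what the paper's one-sentence proof silently relies on.
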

\begin{proof}
The proof is identical to Theorem \ref{thm:main} with closed neighborhoods replacing open neighborhoods throughout. 
\end{proof}

Besides the initial vertex, the $\mun$-sequences for a layer are the same in the closed and open cases outside of layer 0 , since every vertex is covered by a lower layer. Thus we can find the value for $\Hun[Q^n]$.
\begin{prop}
$$\Hun[Q^n] = \binom{n}{\lceil n/2 \rceil} - \sum_{i=1}^{\lfloor n/2 \rfloor - 1} \binom{2i}{i-1} + \sum_{i=1}^{\lfloor n/2 \rfloor - 1} \binom{2i}{i}.$$
\end{prop}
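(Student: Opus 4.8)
The plan is to run the entire Section~4 development again with closed neighborhoods in place of open ones. First I would verify that the looped hypercube has closed isoperimetric nesting, taking the whole-cube weightlex order as the nest order: the optimality of initial segments (condition~(3)) is Harper's closed-neighborhood theorem quoted at the start of Section~4, and the nesting condition~(4), that $N[\init(\preceq,k)]$ is again an initial segment, follows exactly as in Lemma~\ref{lem:nlexislex} --- an initial weightlex segment is a union of complete lower layers together with a lex-prefix of a single layer, and $N[\cdot]$ of such a set is patently of the same shape. Granting this, the closed analogue of Theorem~\ref{thm:main} gives $\Hun[Q^n]=1+\max_k\{\mun[k]-k\}$, so the whole proposition reduces to evaluating $\max_k\{\mun[k]-k\}$.

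Next I would compute the closed difference sequence. The essential point, noted in the text preceding the statement, is that every vertex outside layer $0$ has all of its lower neighbors already covered once the lower layers lie in the segment, so adjoining such a vertex enlarges $N[\cdot]$ by exactly its number of not-yet-covered upper neighbors. This is the ``ignore the lower neighbor'' normalization of Proposition~\ref{prop:difference}, whence the closed difference subsequence for layer $\ell\geq 1$ is again $(n-\ell)\up{\ell}$, while layer $0$ contributes the lone entry $n+1$. Because complete layers telescope, a point lying at position $p$ inside layer $L$ satisfies $\mun[k]-k=\binom{n}{L}+\bigl(\munsp(p)-p\bigr)$, where the bracketed term is the internal excess of the free-standing sequence $(n-L)\up{L}$. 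Hence $\max_k\{\mun[k]-k\}=\max_L\bigl[\binom{n}{L}+\max_p(\munsp(p)-p)\bigr]$, and the inner maximum is supplied by Lemma~\ref{messlemma} applied to $(n-L)\up{L}$.

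It then remains to maximize over $L$ and collapse. Feeding the central layer $L=\lceil n/2\rceil$ into Lemma~\ref{messlemma} produces the tail $\sum_i\binom{2i}{i}-\sum_i\binom{2i}{i-1}$, while the head is $\binom{n}{L}=\binom{n}{\lceil n/2\rceil}$; the $+1$ from the nesting theorem cancels the $-1$ carried by the position formula of Lemma~\ref{messlemma}, yielding the stated expression. I expect the main obstacle to be precisely this outer maximization: since $\binom{n}{L}$ swells toward the center while the internal excess $\max_p(\munsp(p)-p)$ decays, one must argue carefully that the balance is actually struck at $L=\lceil n/2\rceil$ --- for odd $n$ the two central heads $\binom{n}{\lfloor n/2\rfloor}=\binom{n}{\lceil n/2\rceil}$ coincide, so the comparison of the competing internal-excess terms is delicate --- and in particular one must treat the boundary regime $n-L\leq L$ of Lemma~\ref{messlemma} with care, as that is exactly where the constant term (and the $\pm1$ bookkeeping) is decided. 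Once the maximizing index is fixed, the binomial collapse is the same Pascal-identity manipulation already performed in Theorem~\ref{thm:qnmagic}, now summing the contributions of all layers rather than those of a single parity class.
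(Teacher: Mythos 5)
Your reduction is indeed the route the paper intends --- the paper states this proposition with no proof at all, leaving exactly the three stages you describe --- and the first two stages are sound: $Q^n$ has closed isoperimetric nesting in whole-cube weightlex order, the closed analogue of Theorem~\ref{thm:main} gives $\Hun[Q^n]=1+\max_k\{\mun[k]-k\}$, and since the closed neighborhood of layers $0,\dots,L-1$ is layers $0,\dots,L$, completing the lower layers contributes exactly $\binom{n}{L}$ of excess, so that $\max_k\{\mun[k]-k\}=\max_L\bigl[\binom{n}{L}+M_L\bigr]$, where $M_L$ is the internal excess of the layer subsequence $(n-L)\up{L}$. The genuine gap is that the two steps you defer to the end are not delicate technicalities: both are false, so the proposal cannot be completed into a proof of the displayed formula. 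First, for odd $n$ the outer maximum is attained at $L=\lfloor n/2\rfloor$, not $\lceil n/2\rceil$: the heads tie, but the internal excesses (which are partial sums of the Catalan numbers $\binom{2i}{i}-\binom{2i}{i-1}$) do not. Concretely, for $n=5$, layer $2$ gives $\binom{5}{2}+M_2=10+3=13$ (from $3\up{2}=(3,2,1,0,2,1,0,1,0,0)$), while layer $3$ gives $10+1=11$; the displayed formula evaluates to $11$, i.e.\ it computes the losing candidate. Second, the $+1$ from the nesting theorem cancels against nothing, because $M_L$ is already position-normalized: for $n=4$ the closed difference sequence is $(5,3,2,1,0,2,1,0,1,0,0,1,0,0,0,0)$, so $\max_k\{\mun[k]-k\}=7$ and the closed nesting theorem forces $\Hun[Q^4]=8$, whereas the displayed formula evaluates to $6-1+2=7$. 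A further caution: Lemma~\ref{messlemma} cannot be ``fed'' the layer $L=\lceil n/2\rceil$ at all, since that subsequence is $(\lfloor n/2\rfloor)\up{\lceil n/2\rceil}$, whose base is \emph{smaller} than its arrow index, outside the regime of the lemma's decomposition; and even in scope its value formula is off by one (for $3\up{2}$ it returns $2$, while the true internal excess is $3$), so the internal excesses should be computed directly from the recursion.

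Carried out faithfully, your own reduction yields $\Hun[Q^n]=1+\binom{n}{\lfloor n/2\rfloor}+\sum_{i=1}^{\lceil n/2\rceil-1}\bigl(\tbinom{2i}{i}-\tbinom{2i}{i-1}\bigr)$, which checks against small cases ($n=2$ gives $3=\Hun[C_4]$, matching the remarks opening Section~5; $n=3$ gives $5$; $n=4$ gives $8$; $n=5$ gives $14$) but agrees with the stated proposition for no $n$: for even $n$ it exceeds it by exactly $1$, and for odd $n$ by $1+\tbinom{2\lfloor n/2\rfloor}{\lfloor n/2\rfloor}-\tbinom{2\lfloor n/2\rfloor}{\lfloor n/2\rfloor-1}$. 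In other words, the ``main obstacle'' you flagged is precisely the point at which the stated formula is inconsistent with the paper's own closed-nesting theorem, and the two assertions in your final paragraph (that the balance is struck at $L=\lceil n/2\rceil$, and that the $+1$ cancels a $-1$) are exactly where a write-up would break. A correct completion of your argument proves the corrected formula above, not the statement as printed.
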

\subsection*{Acknowledgement:} The second author is grateful to his advisors Jamie Radcliffe and John Meakin for providing direction on the problem. The authors would also like to thank the organizers of the 2015 Graduate Research Workshop in Combinatorics where the problem proposal originated and Brent McKain for presenting the problem. 

\bigskip
\bibliographystyle{plain}

\bibliography{myreferences}

\begin{thebibliography}{10}

\bibitem{Abramovskaya}
Tatjana~V. Abramovskaya, Fedor~V. Fomin, Petr~A. Golovach, and Micha{\l}
  Pilipczuk.
\newblock How to hunt an invisible rabbit on a graph.
\newblock {\em Eur. J. Comb.}, 52(PA):12--26, February 2016.

\bibitem{Bezrukov}
Sergei~L Bezrukov.
\newblock Isoperimetric problems in discrete spaces.
\newblock {\em Extremal problems for finite sets}, 3:59--91, 1994.

\bibitem{Bollobas}
B{\'e}la Bollob{\'a}s.
\newblock {\em Modern graph theory}, volume 184.
\newblock Springer Science \& Business Media, 2013.

\bibitem{Discretetorus}
B{\'e}la Bollob{\'a}s and Imre Leader.
\newblock An isoperimetric inequality on the discrete torus.
\newblock {\em SIAM Journal on Discrete Mathematics}, 3(1):32--37, 1990.

\bibitem{Bonato}
Anthony Bonato and Richard~J Nowakowski.
\newblock {\em The game of cops and robbers on graphs}, volume~61.
\newblock American Mathematical Society Providence, 2011.

\bibitem{Britnell2013}
John~R. Britnell and Mark Wildon.
\newblock {Finding a princess in a palace: A pursuit-evasion problem}.
\newblock {\em Electronic Journal of Combinatorics}, 20(1):1--8, 2013.

\bibitem{Russoly}
R.V. Fedorov, S.~Levy, A.~Kovaldzhi, and I.~Yashchenko.
\newblock Moscow mathematical olympiads, 2000-2005.
\newblock {\em American Mathematical Soc.}, 2011.

\bibitem{Gruslys}
Vytautas Gruslys and Ar{\`e}s M{\'e}roueh.
\newblock Catching a mouse on a tree.
\newblock {\em arXiv preprint arXiv:1502.06591}, 2015.

\bibitem{Harper}
L.H. Harper.
\newblock Optimal numberings and isoperimetric problems on graphs.
\newblock {\em Journal of Combinatorial Theory}, 1(3):385 -- 393, 1966.

\bibitem{Korner}
Janos K{\"{o}}rner and Victor~K. Wei.
\newblock Odd and even hamming spheres also have minimum boundary.
\newblock {\em Discrete Mathematics}, 51(2):147 -- 165, 1984.

\end{thebibliography}

\end{document}